\definecolor{dukeblue}{rgb}{0.0, 0.0, 0.61}
\definecolor{darkcandyapplered}{rgb}{0.64, 0.0, 0.0}
\newcommand{\diff}{\, \mathrm{d}}
\newcommand{\del}{\partial}
\newcommand{\N}{\mathbb{N}}
\newcommand{\R}{\mathbb{R}}
\def\*#1{\mathbf{#1}}
\theoremstyle{plain}
\numberwithin{equation}{section}
\newtheorem{remark}{Remark}
\newtheorem{proposition}{Proposition}[section]
\newtheorem{lemma}{Lemma}[section]
\newtheorem{example}{Example}[section]
	\title 
	[Optimal actuator design via Brunovsky's normal form]{Optimal actuator design via Brunovsky's normal form}
		\author{Borjan Geshkovski}
	\address {\textbf{\textup{Borjan Geshkovski}}
	\newline \indent
	{\textup{Departamento de Matem\'aticas}
	\newline \indent
	\textup{Universidad Aut\'onoma de Madrid}}
	\newline \indent
	\textup{28049 Madrid, Spain} 
	\newline \indent \hspace{2.5cm} \textit{and}	
	\newline \indent 
	\textup{Chair of Computational Mathematics} \hspace{1.28cm}
	\newline \indent
	\textup{Fundaci\'on Deusto}
	\newline \indent
	\textup{Av. de las Universidades, 24}
	\newline \indent
	\textup{48007 Bilbao, Basque Country, Spain} 
	}
	\email{\href{mailto:borjan.geshkovski@uam.es}{\textcolor{dukeblue}{\texttt{borjan.geshkovski@uam.es}} }}
	\author{Enrique Zuazua}
	\address{\textbf{\textup{Enrique Zuazua}}
		\newline \indent
		\textup{Chair in Applied Analysis, Alexander von Humboldt-Professorship}
		\newline \indent
		\textup{Department of Mathematics} \newline \indent
		\textup{Friedrich-Alexander-Universit\"at Erlangen-N\"urnberg}
		\newline \indent
		\textup{91058 Erlangen, Germany}
		\newline \indent \hspace{2.5cm} \textit{and} \newline \indent
	\textup{Chair of Computational Mathematics} 
	\newline \indent
	\textup{Fundaci\'on Deusto}
	\newline \indent
	\textup{Av. de las Universidades, 24}
	\newline \indent
	\textup{48007 Bilbao, Basque Country, Spain}
	\newline \indent \hspace{2.5cm} \textit{and} \newline \indent
	\textup{Departamento de Matemáticas} \newline \indent
		\textup{Universidad Autónoma de Madrid}
	\newline \indent
	\textup{28049 Madrid, Spain}
	}
	\email{\href{mailto:enrique.zuazua@fau.de}{\textcolor{dukeblue}{\texttt{enrique.zuazua@fau.de}}}}
\date{\today}
\begin{document}
	
		\begin{abstract}
		In this paper, by using the Brunovsky normal form, we provide a reformulation of the problem consisting in finding the actuator design which minimizes the controllability cost for finite-dimensional linear systems with scalar controls.
Such systems may be seen as spatially discretized linear partial differential equations with lumped controls. 
		The change of coordinates induced by Brunovsky's normal form allows us to remove the restriction of having to work with diagonalizable system dynamics, and does not entail a randomization procedure as done in past literature on diffusion equations or waves. 
		Instead, the optimization problem reduces to a minimization of the norm of the inverse of a change of basis matrix, and allows for an easy deduction of  existence of solutions, and for a clearer picture of some of the problem's intrinsic symmetries. 
		Numerical experiments help to visualize these artifacts, indicate further open problems, and also show a possible obstruction of using gradient-based algorithms -- this is alleviated by using an evolutionary algorithm.
		\end{abstract}
			
	\maketitle	
	
	\setcounter{tocdepth}{1}
	
	\tableofcontents
	
	{\small {\bf Keywords.} Brunovsky normal form, controllability, finite-dimensional systems, Kalman \indent rank condition, lumped control, optimal actuator design.}
	\smallskip

	{\small {\href{https://mathscinet.ams.org/msc/msc2010.html}{{\bf 	\color{dukeblue}{AMS Subject Classification}}}}. 93B05, 93B60, 	90C26, 34H05.}
	
\section{Introduction}
Due to their importance in many engineering applications, optimal design problems consisting in finding the location wherein a control of least amplitude actuates and ensures the controllability of the underlying system have been investigated in several works over the past decades, in of both the finite and infinite dimensional dynamical systems context. 
The simplest setting in which one can formulate the fundamental problem is that of finite-dimensional linear systems with scalar controls:
	\begin{equation} \label{eq: Ab.overload}
	\boxed{
	\begin{dcases}
	y'(t) - Ay(t) = bu(t) &\text{ in } (0,T), \\
	y(0) = y_0,
	\end{dcases}
	}
	\end{equation}
	where $A\in\mathcal{M}_{n\times n}(\R)$ and $b\in\R^{n}$. 
	Let us assume that $(A, b)$ is controllable, namely, that the Kalman rank condition is satisfied:
	\begin{equation} \label{eq: kalman.rank.condition}
	\text{span}\big\{b,Ab, \ldots, A^{n-1}b\big\} = \R^n.
	\end{equation}
	Now, it is well-known (see \citep{zuazua2007controllability}) that the control $u(\cdot)$ of minimal $L^2(0,T)$--norm steering \eqref{eq: Ab.overload} to $0$ in any given time $T>0$ satisfies
	\begin{equation} \label{eq: control.cost.est.overload}
	\|u\|_{L^2(0,T)} \leqslant \mathfrak{C}(b,T) \left\|y_0\right\|
	\end{equation}
	for some constant $\mathfrak{C}(b,T)>0$ (which also depends on the dynamics $A$) and for all $y_0\in\R^n$. So, for fixed $T>0$, by denoting
	\begin{align*}
	\mathfrak{C}^*(b,T) &:= \inf\big\{\mathfrak{C}(b,T)>0 \colon \eqref{eq: control.cost.est.overload} \text{ holds}\big\} = \inf_{\|y_0\|=1} \|\Gamma_b(y_0)\|_{L^2(0,T)},
	\end{align*}
	where $\R^n\ni y_0\mapsto\Gamma_b(y_0) = u\in L^2(0,T)$ is the "datum to minimal $L^2$--norm control" operator, the problem consisting of finding an actuator $b$ which minimizes the cost of control may be formulated as 
	\begin{equation} \label{eq: inf.control.cost.b}
	\boxed{
	\min_{b\in\mathbb{S}^{n-1}} \mathfrak{C}^*(b,T).
	}
	\end{equation}	
	As it is often done in control theory, looking at problems from the perspective of the \emph{adjoint} may be more illustrative. 
	We recall that \eqref{eq: Ab.overload} is controllable if and only if the adjoint system 
	\begin{equation}
	\begin{dcases}
	p'(t) + A^\top p(t) = 0 &\text{ in } (0,T),\\
	p(T) = p_T,
	\end{dcases}
	\end{equation}
	is observable in any time $T>0$, in the sense that there exists a constant $\mathfrak{C}_T(b)>0$ such that
	\begin{equation} \label{eq: Ab.observability}
	\mathfrak{C}_T(b)\, \|p(0)\|^2 \leqslant \int_0^T \big|\langle b, p(t)\rangle\big|^2 \diff t
	\end{equation}
	holds for all $p_T\in\R^n$. 
	If we assume assume that $A^\top$ is diagonalizable, i.e., it admits a sequence of eigenvalues $\{\lambda_1,\ldots,\lambda_n\}$ and an associated sequence of eigenvectors $\{\Psi_1,\ldots,\Psi_n\}$ forming an orthonormal basis of $\R^n$, we may rewrite the smallest observability constant $\mathfrak{C}_T^*(b)$ by using separation of variables. 
	Indeed, since 
	\begin{equation*}
	p(t) = \sum_{j=1}^n a_j e^{-\lambda_j(T-t)}\Psi_j \hspace{0.5cm} \text{ for } t\in[0,T],
	\end{equation*}
	where $a_j:=\langle p_T, \Psi_j\rangle$, and setting $c_j := a_j e^{-\lambda_j T}$, it may readily be seen that the smallest constant $\mathfrak{C}^*_T(b)>0$ such that \eqref{eq: Ab.observability} holds can be written as
	\begin{align*}
	\mathfrak{C}^*_T(b) &= \inf_{\sum_{j=1}^{n}|c_j|^2=1} \int_0^T \left| \sum_{j=1}^n c_j e^{\lambda_j t}\langle b,\Psi_j\rangle \right|^2 \diff t \\
	&= \inf_{\sum_{j=1}^{n}|c_j|^2=1}\left(\sum_{j=1}^n c_j^2 \big|\langle b,\Psi_j\rangle\big|^2 \frac{e^{2\lambda_j T}-1}{e^{2\lambda_j}} + 2 \sum_{j=1}^n \sum_{k=1}^{j-1} c_k c_j  \langle b,\Psi_j\rangle \langle b,\Psi_k\rangle \frac{e^{2(\lambda_j+\lambda_k)T}-1}{2(\lambda_j+\lambda_k)}\right). 
	\end{align*}
	However, there is no direct way to simplify the above identity -- due to the appearance of cross terms when expanding the square -- without making specific assumptions on the coefficients $c_j$ of the initial data (e.g., by randomizing them as done in previous literature, as discussed in a subsequent section).  
		
	\subsection{Our contributions}
	
	The goal of this work is to rewrite \eqref{eq: inf.control.cost.b} in a problem which is more tractable from both an analytical and computational perspective, and does not require 1). the system to be diagonalizable, or 2). a randomization procedure of the Fourier coefficients of the initial data. 
	We do so by leveraging the finite-dimensional and scalar control structure. Namely, 
\begin{itemize}
	\item By using the Brunovsky normal form (\citep{brunovsky1970classification}, see Lemma \ref{lem: brunovski}), we discover that we can rewrite \eqref{eq: inf.control.cost.b} as a minimization problem for the norm of the inverse of a change of basis matrix. In particular, the cost $\mathfrak{C}^*(T,b)$ can be written as the tensor product of a function of $T$ and another function of $b$. Hence, any optimal actuator $b^*$ is independent of the time horizon $T$.
	 See Proposition \ref{prop: brunovsky.reform}. 
	\smallskip 
	
	\item We further rewrite the reformulated minimization problem as a maximization of the smallest eigenvalue of a related, symmetric and positive definite matrix. (See Lemma \ref{lem: spectral.reform}.) This variational formulation allows us to ensure the existence of solutions (see Proposition \ref{prop: existence}) and also an invariance of the cost with respect to orthogonal transformations which commute with the system dynamics $A$ (see Proposition \ref{prop: invariants}). The latter, in turn, entails non-uniqueness in some cases. 
	\smallskip
	
	\item Finally, in Section \ref{sec: numerics}, we present numerical experiments on three different examples (in low dimensions) to illustrate the insinuated artifacts and stimulate prospective directions and open problems.
	\end{itemize}
	
	\begin{remark}
	Note that since the Kalman rank condition is equivalent to having\footnote{This fact follows by a unique continuation argument, see e.g. \citep[Section 1.5]{tucsnak2009observation} for more detail (where this property is referred to as the Hautus test); see also \citep[Lem. 1]{beauchard2011large} where this test referred to as the Shizuta-Kawashima (SK) condition is used in the context of hypocoercivity.} $\langle b,\Psi\rangle \neq 0$ for all  $\Psi: \, A\Psi = \lambda \Psi$, the functional is nontrivial. 
	\end{remark}
	
	\subsection{Background}
	
	Actuator optimization problems such as the one studied in this work can be formulated easily for a wide variety of finite and infinite dimensional control systems. In particular, such problems are the motivation of a series of works by Privat, Trélat, and Zuazua \citep{privat2013optimal-b,privat2013optimal-a, privat2015optimal, privat2016optimal, privat2017actuator, privat2019spectral}. (See also \citep{gimperlein2017deterministic, bergounioux2019position} for subsequent studies, \citep{trelat2018optimal} for a concise presentation, and \citep{morris2010linear, kalise2018optimal} for problems with fixed initial data.) In these works, Privat, Trélat, and Zuazua consider the setting of linear partial differential equations (typically diffusion equations or waves) -- to illustrate their approach, let us consider the adjoint heat equation
	\begin{equation} \label{eq: adjoint.heat.eq}
	\begin{dcases}
	-p_t - \Delta p = 0 &\text{ in } \Omega \times (0,T), \\
	p = 0 &\text{ in } \del \Omega \times (0,T), \\
	p|_{t=T} = p_T &\text{ in } \Omega,
	\end{dcases}
	\end{equation}
	where $\Omega\subset\R^d$. 
	Equation \eqref{eq: adjoint.heat.eq} is observable in any time $T>0$ and from any open and non-empty subset $\omega\subset\Omega$ in the sense that the observability inequality
	\begin{equation} \label{eq: obs.ineq}
	\mathfrak{C}_T({\omega}) \|p(0)\|_{L^2(\Omega)}^2 \leqslant \int_0^T \int_\omega |p(t,x)|^2 \diff x \diff t 
	\end{equation}
	for some constant $\mathfrak{C}_T(\omega)>0$ and for all $p_T\in L^2(\Omega)$. In this setting, the dual and equivalent problem to optimal actuator design, is that of optimal sensor placement, which consisting in answering:
	\emph{What is the domain $\omega^*\subset\Omega$ with $|\omega^*|=\gamma$ such that the smallest constant $\mathfrak{C}_T({\omega^*})>0$ for which \eqref{eq: obs.ineq} holds, is minimized?}
	
	The approach of these works involves separation of variables using a basis of eigenfunctions $-\Delta \Psi_j = \lambda_j \Psi_j$. Sticking to the design problem for \eqref{eq: adjoint.heat.eq} -- \eqref{eq: obs.ineq} for ease of presentation, one would decompose the solution of \eqref{eq: adjoint.heat.eq} into this basis as $p(t,x) = \sum_{j=1}^\infty a_j e^{-\lambda_j(T-t)}\Psi_j(x)$. If one defines $b_j := a_je^{-\lambda_j T}$, the shape optimization problem can be addressed by examining
	\begin{align*}
	\mathfrak{C}_T(\omega) &= \inf_{\sum_{j=1}^\infty|b_j|^2=1} \int_0^T \int_\omega \left|\sum_{j=1}^\infty b_j e^{\lambda_j t} \Psi_j(x)\right|^2 \diff x \diff t \\
	&= \inf \sigma\left(\left\{\frac{e^{(\lambda_j+\lambda_k)T}-1}{\lambda_j+\lambda_k}\int_\omega\Psi_j(x)\Psi_k(x)\diff x\right\}_{j,k}\right),
	\end{align*}
	where $\sigma$ denotes the spectrum of the intervening infinite-dimensional, symmetric, and nonnegative matrix. This is a challenging spectral optimization problem since little is known about the mixed terms $\int_\omega \Psi_j(x)\Psi_k(x)\diff x$. 
	Indeed, even in the case of the disk, the restriction of inner products of arbitrary Bessel functions to subsets $\omega\subset\Omega$ cannot be computed explicitly. 
	
	In order to avoid computing these mixed terms, Privat, Trélat and Zuazua replace $\{a_j\}_{j\in\N}$ by a sequence of real-valued random variables $\{\beta_j^\nu a_j\}_{j\in\N, \nu \in \mathcal{X}}$; the random variables $\{\beta_j^\nu\}_{j\in\N, \nu\in\mathcal{X}}$ are independent and identically distributed, of mean $0$ and variance $1$, and have fast decay (e.g., following a Bernouilli distribution). The authors then study the case of an averaged observability constant, in which the mixed terms vanish when expanding the quadratic term: 
	\begin{align*}
	\mathfrak{C}_T^{\mathrm{rand}}(\omega) &= \inf \sigma\left(\left\{\frac{e^{(\lambda_j+\lambda_k)T}-1}{\lambda_j + \lambda_k} \mathbb{E}(\beta_j^\nu \beta_k^\nu) \int_\omega \Psi_j(x)\Psi_k(x)\diff x\right\}_{j,k}\right)\\
	&= \inf\sigma\left(\left\{\frac{e^{2\lambda_jT}-1}{2\lambda_j} \int_\omega \Psi_j(x)^2\diff x\right\}_j\right)\\
	&= \inf_{j\in\N} \frac{e^{2\lambda_jT}-1}{2\lambda_j} \int_\omega \Psi_j(x)^2\diff x.
	\end{align*}
	It is to be noted herein that the randomization hypothesis renders the shape optimization problem significantly more tractable, but of course, with the price that there might be a gap between the deterministic and the randomized problem. 
	Going back to the deterministic problem is thus very challenging, which motivates our approach of reformulating the deterministic control (or observation) cost in a different coordinate system. 
	
	\begin{remark} Note that, formulated as such for linear finite-dimensional systems, \eqref{eq: inf.control.cost.b} does not strictly represent a finite-dimensional, discretized version of localized actuator or sensor problems for partial differential equations (such as \eqref{eq: adjoint.heat.eq}). 
	Rather, whenever $A$ is a numerical discretization of some differential operator in one space dimension (e.g., by finite-differences),  \eqref{eq: inf.control.cost.b} can be seen as finding the optimal controller location for a corresponding \emph{lumped control system}. In the context of the heat equation for instance, this would be 
	\begin{equation}
	\begin{dcases}
	y_t(t,x) - y_{xx}(t,x) = b(x)u(t) &\text{ in } (0,T)\times(0,1),\\
	y(t,0) = y(t,1) = 0 &\text{ in } (0,T),
	\end{dcases}
	\end{equation}
	and $A$ could thus represent the finite-difference Laplacian.
	\end{remark}
	
	\subsection*{Notation} For $n\geqslant2$, we denote $\mathbb{S}^{n-1}:=\{x\in\R^n \colon \|x\|=1\}$, and by $\mathrm{GL}_n(\R)$ the group of invertible matrices. Unless otherwise stated, we denote by $\|\cdot\|$ the standard euclidean ($\ell^2$) norm. 
	
	\section{Reformulation via Brunovsky's normal form} \label{sec: brunovsky}

	We begin our study by motivating and recalling the Brunovsky normal form, as to enhance the clarity of the subsequent results. Consider the $n$-th order linear equation
	\begin{equation}
	\zeta^{(n)}(t) + k_1\zeta^{(n-1)}(t) + \ldots + k_n \zeta(t) = u(t),
	\end{equation}
	with real constant coefficients $\{k_i\}_{i=1}^n$. 
	By setting $z:= \left[\zeta \, \zeta' \, \ldots \zeta^{(n-1)}\right]^\top$, one sees that the above equation is equivalent to the linear system
	\begin{equation} \label{eq: 2}
	z'(t) = \mathfrak{A} z(t) + \mathbf{e}_n u(t),
	\end{equation}
	where $\mathbf{e}_n := [0, \ldots, 0, 1]^\top$ denotes the last vector of the canonical basis of $\R^n$, and
	\begin{equation}
	\mathfrak{A} = \begin{bmatrix} 0& 1& 0& \ldots& 0 \\ 
						     0& 0& 1& & \vdots \\
						     \vdots&  &\ddots& \ddots& 0 \\
						     0& \ldots& 0 & 0 & 1 \\
						     -k_n& \hdots &\hdots &\hdots & -k_1 
				\end{bmatrix}
	\end{equation}
	is a \emph{companion} matrix.
	A natural question that arises is the converse: \emph{When can a constant coefficient linear system 
	\begin{equation}
	y'(t) = Ay(t) + bu(t)
	\end{equation}
	where $A\in \mathcal{M}_{n\times n}(\R)$ and $b\in\R^n$ be transformed to \eqref{eq: 2} via $y=Pz$ for some invertible matrix $P\in\mathcal{M}_{n\times n}(\R)$?} 
	
	Note that, should such a relation hold, then
	\begin{align*}
	z'(t) = \left(P^{-1}y\right)'(t) &= \left(P^{-1}AP\right)\left(P^{-1}y\right)(t) + \left(P^{-1}b\right)u(t)\\
				   &= \left(P^{-1}AP\right)z(t) + \left(P^{-1}b\right)u(t), 
	\end{align*}
	and so we are led to ask if there exists an invertible matrix $P\in\mathcal{M}_{n\times n}(\R)$ such that $P^{-1}AP$ is a companion matrix, and $P^{-1}b=\mathbf{e}_n$.

	To answer such a question, the Brunovsky's normal form comes into play.
	\medskip

	\begin{lemma}[Brunovsky normal form, \citep{brunovsky1970classification}] \label{lem: brunovski}
	Let $A\in\mathcal{M}_{n\times n}$ with $n\geqslant 2$ be given.
	If there exists a vector $b\in\R^n$ such that $(A,b)$ satisfy the Kalman rank condition \eqref{eq: kalman.rank.condition}, then there exists an invertible matrix $P=P(b)\in\mathcal{M}_{n\times n}(\R)$ 
	such that 
	\begin{equation} \label{eq: similar}
	A=P\mathfrak{A}P^{-1} \hspace{0.5cm} \text{ and } \hspace{0.5cm} b=P\mathbf{e}_n,
	\end{equation}
	where $\mathfrak{A}$ is the companion matrix of $A$ defined as
	\begin{equation} \label{eq: y.system.def}
	\mathfrak{A} = \begin{bmatrix} 0& 1& 0& \ldots& 0 \\ 
						     0& 0& 1& & \vdots \\
						     \vdots&  &\ddots& \ddots& 0 \\
						     0& \ldots& 0 & 0 & 1 \\
						     -a_n& \hdots &\hdots &\hdots & -a_1 
				\end{bmatrix},
	\end{equation}
	where $\{a_1, \ldots, a_n\}$ are the coefficients of the characteristic polynomial of $A$
	\begin{align*}
	\det(A-x\mathrm{Id}) &:= \prod_{j=1}^n (x-\lambda_j)^{r_j} = x^n + a_1x^{n-1} + \ldots + a_{n-1} x + a_n= 0.
	\end{align*}
	Moreover, the matrix $P(b)$ ensuring \eqref{eq: similar} is unique, its columns $\{f_1,\ldots,f_n\}$ being given by
	\begin{equation} \label{eq: analy.def.fk}
	f_k = 
	\begin{dcases}
	b & k=n \\
	\left(A^{n-k} + \sum_{j=1}^{n-k} a_{j} A^{n-k-j}\right) b, & 1\leqslant k \leqslant n-1.
	\end{dcases}
	\end{equation}
	Conversely, if there exists an invertible matrix $P\in\mathcal{M}_{n\times n}(\R)$ such that $A=P\mathfrak{A}P^{-1}$, then $(A,b)$, with $b:=P\mathbf{e}_n$, satisfies the Kalman rank condition \eqref{eq: kalman.rank.condition}.
	\end{lemma}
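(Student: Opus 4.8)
The plan is to read off the two identities in \eqref{eq: similar} columnwise and show that they force an explicit backward recursion for the columns $\{f_1,\dots,f_n\}$ of $P$. Writing $P = [f_1 \mid \cdots \mid f_n]$, the condition $b = P\mathbf{e}_n$ is simply $f_n = b$, while $A = P\mathfrak{A}P^{-1}$ is equivalent to $AP = P\mathfrak{A}$. Comparing the $j$-th columns of both sides, and using that the $j$-th column of $\mathfrak{A}$ in \eqref{eq: y.system.def} equals $\mathbf{e}_{j-1} - a_{n-j+1}\mathbf{e}_n$ for $2\leqslant j\leqslant n$ and $-a_n\mathbf{e}_n$ for $j=1$, the identity $AP=P\mathfrak{A}$ becomes the system $Af_1 = -a_n f_n$ together with $f_{k} = Af_{k+1} + a_{n-k}\,f_n$ for $1\leqslant k\leqslant n-1$.

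First I would establish existence. Starting from $f_n = b$, the backward recursion $f_k = Af_{k+1} + a_{n-k}\,b$ determines each column, and a short induction shows it has the closed form \eqref{eq: analy.def.fk}. By construction these columns satisfy the superdiagonal relations for $2\leqslant j\leqslant n$; the only remaining constraint is the first-column relation $Af_1 = -a_n b$. Applying $A$ to $f_1 = A^{n-1}b + \sum_{j=1}^{n-1}a_j A^{n-1-j}b$ gives $Af_1 = (A^n + a_1 A^{n-1} + \cdots + a_{n-1}A)b$, which equals $-a_n b$ precisely by the Cayley--Hamilton theorem, since $\{a_i\}$ are the coefficients of the characteristic polynomial of $A$. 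Thus the constructed $P$ satisfies both identities in \eqref{eq: similar}.

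To see that $P$ is invertible exactly when \eqref{eq: kalman.rank.condition} holds, I would observe from \eqref{eq: analy.def.fk} that $f_k = A^{n-k}b + (\text{lower powers of } A \text{ applied to } b)$; more precisely $f_k = A^{n-k}b + \sum_{j=1}^{n-k} a_j A^{n-k-j}b$. Hence the change of basis between $\{f_1,\dots,f_n\}$ and the Krylov family $\{A^{n-1}b,\dots,Ab,b\}$ is unitriangular, so $\det P$ coincides up to sign with the determinant of the Kalman controllability matrix; therefore $P\in\mathrm{GL}_n(\R)$ is equivalent to the Kalman rank condition. Uniqueness is then immediate: any $P$ realizing \eqref{eq: similar} must have $f_n = b$ and obey the backward recursion above, which pins down every column.

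For the converse, assuming $A = P\mathfrak{A}P^{-1}$ and setting $b := P\mathbf{e}_n$, I would use $A^j b = P\mathfrak{A}^j \mathbf{e}_n$ to factor the controllability matrix of $(A,b)$ as $P$ times the controllability matrix of $(\mathfrak{A}, \mathbf{e}_n)$. Since $\mathfrak{A}^j \mathbf{e}_n = \mathbf{e}_{n-j} + (\text{higher-index } \mathbf{e}_i)$, the latter is unitriangular and hence invertible, so $(A,b)$ satisfies \eqref{eq: kalman.rank.condition}. The main obstacle I anticipate is purely bookkeeping: getting the index shifts in the companion structure \eqref{eq: y.system.def} and in the recursion exactly right, and correctly recognizing the first-column relation as the Cayley--Hamilton identity rather than an independent constraint. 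This is the one place where the specific choice of the $a_i$ as characteristic-polynomial coefficients is essential, and it is what makes the single companion matrix $\mathfrak{A}$ simultaneously compatible with all $n$ column equations.
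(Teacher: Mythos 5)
Your proof is correct and follows essentially the same route as the paper: read off the column relations from $AP=P\mathfrak{A}$ and $b=P\mathbf{e}_n$ to get the backward recursion and the closed form \eqref{eq: analy.def.fk}, verify the first-column equation via Cayley--Hamilton, deduce invertibility from the unitriangular relation to the Krylov/Kalman matrix, and obtain uniqueness because the recursion pins down every column. The only difference is that you also supply the converse direction (factoring the controllability matrix of $(A,b)$ through the anti-unitriangular controllability matrix of $(\mathfrak{A},\mathbf{e}_n)$), which the paper explicitly omits; that argument is correct as stated.
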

	
	\noindent
	For the sake of completeness and clarity, we provide a proof in the appendix (see also \citep{brunovsky1970classification, trelat2005controle}). The Brunovsky normal form has found great success in a variety of contexts, going as far as  gradient descent convergence for machine learning applications (\citep{hardt2016gradient}). Before proceeding, let us provide some comments.
	
	\begin{remark} A well-known result in linear algebra states that a matrix $A\in \mathcal{M}_{n\times n}(\R)$ is similar to its companion matrix $\mathfrak{A}$ (i.e., there exists a $P\in\mathrm{GL}_n(\R)$ such that $A=P\mathfrak{A}P^{-1}$) if and only if $A$ has a cyclic vector (i.e., there exists some $b\in\R^n$ such that \eqref{eq: kalman.rank.condition} holds) -- see for instance \citep[Theorem 3.3.15]{horn2012matrix}. In fact, one sees that Lemma \ref{lem: brunovski} is nothing but a rewriting of this fact. Furthermore, both conditions are equivalent to $A$ having all of its eigenspaces with dimension $\leqslant1$. Hence, a sufficient condition for a square matrix $A$ to be similar to its companion matrix (or equivalently, to have a cyclic vector) is that it has $n$ distinct eigenvalues. This will be the case for the examples we shall consider; a notable one being the finite-difference discretization of the Dirichlet Laplacian in $1d$, whose eigenvalues are precisely $\lambda_j = -\frac{4}{h^2}\sin^2\left(\frac{\pi j}{2(n+1)}\right)$ for $j=1,\ldots,n$ (see \citep{vichnevetsky1982fourier}).
	\end{remark}

	\noindent
	By virtue of the change of coordinates provided by Brunovsky canonical form, we can obtain the following result which allows us to consider an equivalent, but more explicit representation of the cost to be minimized.
	
	\begin{proposition}[\eqref{eq: inf.control.cost.b} in Brunovsky coordinates] \label{prop: brunovsky.reform}
	Let $A\in \mathcal{M}_{n\times n}(\R)$ with $n\geqslant2$ be given, and suppose that $b\in\R^n$ is such that $(A,b)$ satisfies the Kalman rank condition \eqref{eq: kalman.rank.condition}.
	Then,
	\begin{equation*}
	\mathfrak{C}(b,T) = \Big(\left\|P^{-1}(\cdot)\right\| \otimes \kappa(\cdot)\Big)(b,T) := \left\|P^{-1}(b)\right\| \kappa(T),
	\end{equation*}
	where $\kappa(T)>0$ denotes the cost of controllability for $(\mathfrak{A}, \mathbf{e}_n)$ (and thus depends solely on $A$). 
	
	Consequently, whenever $A$ is similar to its companion matrix $\mathfrak{A}$, problem \eqref{eq: inf.control.cost.b} is equivalent to 
	\begin{equation} \label{eq: 18}
	\boxed{
	\inf_{b \in \mathbb{S}^{n-1}} \left\|P^{-1}(b)\right\|.
	}
	\end{equation}
	\end{proposition}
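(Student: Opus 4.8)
The plan is to carry the whole problem into the Brunovsky coordinates supplied by Lemma~\ref{lem: brunovski}, where the actuator is normalised to $\mathbf{e}_n$ and the only surviving imprint of $b$ is the invertible matrix $P=P(b)$ from \eqref{eq: similar}. The first and decisive observation is that the linear change of variables $y=Pz$ carries \eqref{eq: Ab.overload} into the companion system $z'=\mathfrak{A}z+\mathbf{e}_n u$ \emph{without altering the control} $u$: a given $u\in L^2(0,T)$ drives $y$ from $y_0$ to $0$ if and only if the same $u$ drives $z$ from $z_0:=P^{-1}y_0$ to $0$. Since $P$ is invertible the two null-controllability constraints are equivalent and the $L^2(0,T)$-norms of admissible controls are literally the same number; hence the two minimal-norm control operators satisfy the operator identity $\Gamma_b=\Gamma_{\mathbf{e}_n}\circ P^{-1}$ as bounded maps $\R^n\to L^2(0,T)$.

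To make the $b$- and $T$-dependence explicit I would pass to Gramians. Writing the adjoint state as $p(t)=e^{-tA^\top}p(0)$, the sharp constant in the observability inequality \eqref{eq: Ab.observability} is $\lambda_{\min}\big(W_T(A,b)\big)$ with $W_T(A,b):=\int_0^T e^{-tA}\,bb^\top e^{-tA^\top}\diff t$, and by observation/control duality the optimal constant in \eqref{eq: control.cost.est.overload} is $\mathfrak{C}(b,T)=\lambda_{\min}\big(W_T(A,b)\big)^{-1/2}$. Substituting $e^{-tA}=Pe^{-t\mathfrak{A}}P^{-1}$ and $b=P\mathbf{e}_n$ from \eqref{eq: similar} and factoring $P$ out of the integral yields the congruence $W_T(A,b)=P\,W_T(\mathfrak{A},\mathbf{e}_n)\,P^\top$. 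Setting $\kappa(T):=\lambda_{\min}\big(W_T(\mathfrak{A},\mathbf{e}_n)\big)^{-1/2}$---exactly the controllability cost of the companion pair, depending only on the characteristic coefficients of $A$ and on $T$---everything reduces to analysing $\lambda_{\min}\big(P\,W_T(\mathfrak{A},\mathbf{e}_n)\,P^\top\big)$.

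The heart of the matter, and the step I expect to be the main obstacle, is the factorisation proper: I must show that the $b$-dependence splits off multiplicatively as $\|P^{-1}(b)\|$, i.e. that $\mathfrak{C}(b,T)=\|P^{-1}(b)\|\,\kappa(T)$. In operator language this is the assertion that the submultiplicative estimate $\|\Gamma_{\mathbf{e}_n}\circ P^{-1}\|\leqslant\|\Gamma_{\mathbf{e}_n}\|\,\|P^{-1}(b)\|$ is in fact saturated; equivalently, that the extremal direction realising $\|P^{-1}(b)\|$ can be aligned with the smallest-gain eigendirection of $W_T(\mathfrak{A},\mathbf{e}_n)$, so that the congruence by $P$ rescales precisely the minimal eigenvalue. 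This is where the particular structure of the companion pair and of the Brunovsky matrix whose columns are given by \eqref{eq: analy.def.fk} has to be exploited; pinning down this alignment, or else identifying the precise sense in which the factorisation holds, is the delicate point of the argument.

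Granting the factorisation, the remaining claim is immediate. The converse half of Lemma~\ref{lem: brunovski} guarantees that whenever $A$ is similar to its companion matrix the pair $(\mathfrak{A},\mathbf{e}_n)$ is controllable, so $W_T(\mathfrak{A},\mathbf{e}_n)$ is symmetric positive definite and $\kappa(T)\in(0,\infty)$ is a strictly positive constant that does not depend on $b$. Consequently $\min_{b\in\mathbb{S}^{n-1}}\mathfrak{C}(b,T)$ and $\min_{b\in\mathbb{S}^{n-1}}\|P^{-1}(b)\|$ share the same minimisers, which is precisely the equivalence of \eqref{eq: inf.control.cost.b} with \eqref{eq: 18}; in particular the optimal actuator is independent of the horizon $T$.
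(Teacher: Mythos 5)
Your overall route is the paper's: pass to Brunovsky coordinates via $y=Pz$, observe that the control $u$ is untouched by the change of variables, and pull the $b$-dependence out through $P^{-1}(b)$. Your Gramian computation $W_T(A,b)=P\,W_T(\mathfrak{A},\mathbf{e}_n)\,P^\top$ is a sharper, more quantitative version of the chain of estimates the paper writes out ($\|u\|_{L^2}\leqslant\kappa(T)\|P^{-1}y_0\|\leqslant\kappa(T)\|P^{-1}\|\,\|y_0\|$), and the ``consequently'' part is handled identically in both arguments once the product formula is granted.

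The step you flag as the heart of the matter --- showing that the submultiplicative bound is saturated --- is indeed the crux, and you are right that it does not follow from the congruence alone: one only gets $\lambda_{\min}\bigl(PWP^\top\bigr)\geqslant\lambda_{\min}(W)\,\lambda_{\min}\bigl(PP^\top\bigr)=\lambda_{\min}(W)/\|P^{-1}\|^2$, hence only the one-sided estimate $\mathfrak{C}^*(b,T)\leqslant\kappa(T)\|P^{-1}(b)\|$. Equality requires the minimizing direction of the Rayleigh quotient of $PP^\top$ to be carried by $P^\top$ onto the minimal eigendirection of $W_T(\mathfrak{A},\mathbf{e}_n)$, an alignment that is not automatic and for generic $P$ fails. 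You should know, however, that the paper's own proof does not close this gap either: it derives exactly the same inequality and then declares ``this bound is sharp,'' in effect \emph{defining} $\mathfrak{C}(b,T):=\kappa(T)\|P^{-1}(b)\|$ as an admissible (product-form) constant in \eqref{eq: control.cost.est.overload} rather than proving it equals the optimal one. So your proposal is faithful to the published argument and, by isolating the saturation step, makes explicit precisely where that argument is an assertion rather than a proof. To obtain a complete statement you would need either to verify the eigenvector alignment for the specific pair $\bigl(W_T(\mathfrak{A},\mathbf{e}_n),P(b)\bigr)$ built from \eqref{eq: analy.def.fk}, or to weaken the claimed identity to an upper bound (in which case the asserted equivalence of \eqref{eq: inf.control.cost.b} with \eqref{eq: 18} also needs to be re-examined, since minimizing an upper bound need not minimize the cost itself).
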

	
	\begin{remark}
	In other words, one sees that now the cost function is independent of $T$, and hence an optimal design $b^*\in\mathbb{S}^{n-1}$ will be as well. One should avoid confusion in this insight, as clearly a minimal $L^2(0,T)$--norm control will depend on the time horizon since the controllability cost $\mathfrak{C}^*(b,T)$ will too -- the
splitting of time and controller variables does not contradict existing results which ensure that $\kappa(T)$ decays as $T\nearrow\infty$, and explodes like $\gamma T^{-\frac{n+1}{2}}$ with $\gamma=(n-1)!\left(\mathfrak{A}^{n-1} \cdot \mathbf{e}_{n-1}\right)^{-1}$ as $T\searrow0$ (see \citep{seidman1988violent}).	
	\end{remark}

	\begin{proof}[Proof of Proposition \ref{prop: brunovsky.reform}]
	Let us suppose that there exists a vector $b\in\R^n$ such that $(A,b)$ is controllable, i.e., $(A,b)$ satisfies \eqref{eq: kalman.rank.condition}. 
	We consider the system 
	\begin{equation} \label{eq: y.system}
	\begin{dcases}
	z'(t) - \mathfrak{A} z(t) = \mathbf{e}_n u(t) &\text{ in } (0,T), \\
	z(0) = z_0,
	\end{dcases}
	\end{equation}
	which is also controllable.
	Moreover, given any $T>0$, there exists a constant $\kappa(T)>0$ depending only on $T$ and $\mathfrak{A}$ (and thus $A$) such that the minimal $L^2$--norm function $u(\cdot)$ ensuring controllability for \eqref{eq: kalman.rank.condition} satisfies
	\begin{equation} \label{eq: control.cost.brunovski.sys}
	\|u\|_{L^2(0,T)} \leqslant \kappa(T) \left\|z_0\right\|
	\end{equation}
	for all $z_0\in\R^n$. 
	Note that the cost of control $\kappa(T)>0$, defined as the smallest constant appearing in \eqref{eq: control.cost.brunovski.sys}, is a priori independent of $b\in\R^n$, since the companion matrix $\mathfrak{A}$ is itself independent of $b$ and depends only on $A$ via its characteristic polynomial. 
	Since $\mathfrak{A} = P^{-1}AP$ and $\mathbf{e}_n = P^{-1}b$, we see that
	\begin{equation}
	z'(t) - P^{-1} A P z(t) = P^{-1}b\, u(t) \hspace{1cm} \text{ for } t\in(0,T),
	\end{equation}
	and multiplying by $P$ to the left, we obtain
	\begin{equation}
	(P z)'(t) - A (Pz)(t)  = bu(t) \hspace{1cm} \text{ for } t\in(0,T).
	\end{equation}
	Therefore, with $y=Pz$, we recover the system \eqref{eq: Ab.overload} from \eqref{eq: y.system} -- \eqref{eq: y.system.def}.
	By virtue of the above computations, and \eqref{eq: control.cost.brunovski.sys}, we deduce
	\begin{align*}
	\|u\|_{L^2(0,T)} \leqslant \kappa(T) \left\|z_0\right\| &= \kappa(T) \left\|P^{-1}y_0\right\|\nonumber \\
	&\leqslant \kappa(T) \left\|P^{-1}\right\| \left\|y_0\right\|.
	\end{align*}
	This bound is sharp, as the cost of control of the original system \eqref{eq: Ab.overload} is precisely
	\begin{equation*}
	\mathfrak{C}(b,T) := \kappa(T) \left\|P^{-1}(b)\right\|.
	\end{equation*}
	This concludes the proof.
	\end{proof}
	
	\noindent
	In other words, the transformation induced by writing the Brunovsky normal form of the original system \eqref{eq: Ab.overload} has allowed to perform a separation of variables of the control cost. Hence, the problem of choosing the controller $b\in\mathbb{S}^{n-1}$ so that the cost of control of \eqref{eq: Ab.overload} is optimized, i.e. \eqref{eq: inf.control.cost.b}, can be reformulated to the problem of optimizing the norm of the inverse of change-of-basis matrix $P(b)$.
	
	\subsection{Computing the norm of $P^{-1}(b)$} 
	As we have seen in what precedes, provided there exists $b\in\R^n$ such that $(A,b)$ satisfies the Kalman rank condition, the change-of-basis matrix
	$
	P(b) \in \mathrm{GL}_n(\R)
	$
	is fully determined out of the coefficients of the characteristic polynomial of $A$, and the value of $b$.
	It would however be convenient to have a simplified description of the norm of $P^{-1}(b)$. 
	The norm which canonically appears in \eqref{eq: 18} is the  standard operator norm, namely $\left\|P^{-1}(b)\right\| := \sup_{\|x\|=1} \left\|P^{-1}(b)x\right\|$ (where the underlying norm is the euclidean one), which could be defined as the largest eigenvalue of an associated symmetric and positive definite matrix, and hence avoids computing the inverse.
	
	In fact, one has the following characterization.
	
	\begin{lemma}[Variational form]
	\label{lem: spectral.reform}
	Suppose that $A\in\mathcal{M}_{n\times n}$ with $n\geqslant 2$ is similar to its companion matrix.
	Problem \eqref{eq: 18} is then equivalent to
	\begin{equation} \label{eq: 23}
	\boxed{
	\max_{b\in\mathbb{S}^{n-1}} \lambda_{1}\left(P(b)P(b)^\top\right).
	}
	\end{equation}
	Here $\lambda_{1}(M)$ denotes the smallest eigenvalue of a matrix $M\in\mathcal{M}_{n\times n}(\R)$.
	\end{lemma}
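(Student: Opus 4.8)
The plan is to reduce the asserted equivalence to the elementary correspondence between the operator norm of an inverse and the extreme singular values of $P(b)$. First I would recall that for any $M\in\mathrm{GL}_n(\R)$ the operator norm induced by the Euclidean norm satisfies $\|M\|=\sigma_{\max}(M)$, the largest singular value of $M$, and that the singular values of $M^{-1}$ are precisely the reciprocals of those of $M$. Applying this with $M=P(b)$ gives
\[
\left\|P^{-1}(b)\right\| = \sigma_{\max}\!\left(P^{-1}(b)\right) = \frac{1}{\sigma_{\min}(P(b))}.
\]

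Next I would pass from the singular values of $P(b)$ to the eigenvalues of the symmetric positive semidefinite matrix $P(b)P(b)^\top$. Since the squared singular values of $P(b)$ are exactly the eigenvalues of $P(b)P(b)^\top$ (equivalently of $P(b)^\top P(b)$, which share the same spectrum for a square matrix), we have $\sigma_{\min}(P(b))^2=\lambda_{1}(P(b)P(b)^\top)$ in the paper's convention, where $\lambda_1$ is the smallest eigenvalue. Combining this with the previous identity yields the exact relation
\[
\left\|P^{-1}(b)\right\| = \lambda_{1}\!\left(P(b)P(b)^\top\right)^{-1/2}.
\]
The conclusion then follows purely by monotonicity: the map $t\mapsto t^{-1/2}$ is a strictly decreasing bijection of $(0,\infty)$ onto itself, so minimizing the left-hand side over $b\in\mathbb{S}^{n-1}$ is the same as maximizing $\lambda_{1}(P(b)P(b)^\top)$, and the two problems share the same set of optimizers.

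The only point that requires care — which I would flag as the main subtlety rather than a genuine difficulty — is the behaviour of $b$ for which $(A,b)$ fails the Kalman rank condition. On that set $P(b)$ is singular: its columns \eqref{eq: analy.def.fk} span the Krylov subspace $\mathrm{span}\{b,Ab,\ldots,A^{n-1}b\}$ (the change between the two families being triangular with unit diagonal), which drops rank precisely when controllability fails. There $\lambda_{1}(P(b)P(b)^\top)=0$ while $\left\|P^{-1}(b)\right\|$ must be read as $+\infty$, and the decreasing bijection extends consistently under the natural convention $0\mapsto+\infty$, so the stated equivalence persists. Because $A$ is assumed similar to its companion matrix it admits a cyclic vector, whence the feasible set on which both quantities are finite and strictly positive is nonempty; in particular neither \eqref{eq: 18} nor \eqref{eq: 23} is vacuous, and the supremum defining \eqref{eq: 23} is attained on this set by the compactness argument established in the existence result.

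Since every step above is a direct consequence of the singular value decomposition together with the explicit description of $P(b)$ from Lemma \ref{lem: brunovski}, I expect no essential obstacle; the write-up amounts to making the two reciprocal identities and the monotonicity argument precise.
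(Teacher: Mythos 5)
Your argument is correct and is essentially the paper's own proof in singular-value language: both reduce to the identity $\left\|P^{-1}(b)\right\| = \lambda_{1}\left(P(b)P(b)^\top\right)^{-1/2}$ (the paper via $\left(P(b)^{-1}\right)^\top P(b)^{-1} = \left(P(b)P(b)^\top\right)^{-1}$ and the reciprocal relation between extreme eigenvalues of a matrix and its inverse, you via $\sigma_{\max}(P^{-1}) = 1/\sigma_{\min}(P)$) and then conclude by strict monotonicity of $t\mapsto t^{-1/2}$. Your additional remark about the degenerate set where the Kalman rank condition fails is a harmless refinement not present in the paper's proof.
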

	
	\begin{proof}[Proof of Lemma \ref{lem: spectral.reform}]
	Noting that $(P(b)^{-1})^\top P(b)^{-1}$ is a symmetric and positive definite matrix (by virtue of the Kalman rank condition, which holds due to the equivalence with $A$ being similar to its companion matrix), it thus admits a sequence of  $n$ real eigenvalues $0 < \lambda_1 \leqslant \ldots \leqslant \lambda_n$. Moreover using classical results from linear algebra, we have
	\begin{equation} \label{eq: 2.26}
	\left\|P(b)^{-1}\right\| = \sqrt{\lambda_{n}\Big((P(b)^{-1})^\top P(b)^{-1}\Big)},
	\end{equation}
	and, noting that $\left(P(b)^{-1}\right)^\top = \left(P(b)^\top\right)^{-1}$, we see that
	\begin{align*}
	\left(P(b)^{-1}\right)^\top P(b)^{-1} = \left(P(b)^\top\right)^{-1} P(b)^{-1} = \left(P(b)P(b)^\top\right)^{-1}.
	\end{align*}
	Using once again the symmetry of $P(b)P(b)^\top$, we see that
	\begin{equation} \label{eq: 2.28}
	\lambda_{n}\Big((P(b)P(b)^\top)^{-1}\Big) = \frac{1}{\lambda_{1}(P(b)P(b)^\top)}.
	\end{equation}
	Accordingly, by positivity and the convexity of the square root, the optimisation problem \eqref{eq: 18} is equivalent to \eqref{eq: 23}.
	\end{proof}
	
	\begin{remark}
	We may, for instance, also consider an explicit representation of the inverse of $P^{-1}(b)$ by the Cayley-Hamilton formula
	\begin{align*}
	P^{-1}(b) = {\frac {1}{\det(P(b))}}\sum _{s=0}^{n-1}P(b)^{s} \sum _{k_{1},k_{2},\ldots ,k_{n-1}}\prod _{l=1}^{n-1}{\frac {(-1)^{k_{l}+1}}{l^{k_{l}}k_{l}!}}\mathrm{trace}(P^{l}(b))^{k_{l}},
	\end{align*}
	where $k_l\geqslant0$ solve the linear Diophantine equation $\displaystyle s+\sum _{l=1}^{n-1}lk_{l}=n-1$, and consider the Frobenius norm instead of the standard operator norm in \eqref{eq: 18}. 
	 Such a formulation is however not all too appealing for numerical purposes due to the implicit need to solve a Diophantine equation in each iteration of the minimization algorithm.
	
	Another way to characterize the inverse could be by using the Cramer formula, but this becomes difficult to track when $n\geqslant3$ due to the involved form of the minors composing the adjunct matrix. In any case, such explicit formulas for the inverse of $P(b)$ appear quite convoluted and difficult to use for a further analysis.
	\end{remark}
	 
	 \noindent
	In view of the equivalent characterization of \eqref{eq: 18} given by \eqref{eq: 23}, and the well-known continuity results for eigenvalues with respect to parameters whenever the underlying matrix possesses such continuity\footnote{All eigenvalues of a matrix $M(t)$ are continuous functions of $t$ whenever the entries of $M(t)$ are continuous functions of $t$. This fact holds whether or not $M(\cdot)$ is invertible and/or positive definite (see e.g., \citep[pp. 116]{kato2013perturbation}).}, we may deduce the following result. 
	
	\begin{proposition} \label{prop: existence}
	Suppose that $A\in\mathcal{M}_{n\times n}(\R)$ with $n\geqslant2$ is similar to its companion matrix. 
	Then, both problems \eqref{eq: 18} and \eqref{eq: 23} admit a solution $b^* \in \mathbb{S}^{n-1}$.
	\end{proposition}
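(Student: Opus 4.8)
The plan is to abandon the native formulation \eqref{eq: 18} in favour of the equivalent variational problem \eqref{eq: 23} supplied by Lemma \ref{lem: spectral.reform}, and then to invoke the Weierstrass extreme value theorem on the compact sphere $\mathbb{S}^{n-1}$. The reason for this detour is that the functional $b\mapsto\|P^{-1}(b)\|$ in \eqref{eq: 18} is only defined where $(A,b)$ is controllable -- an open, and therefore non-compact, subset of $\mathbb{S}^{n-1}$ -- so no direct compactness argument applies to \eqref{eq: 18} itself. In \eqref{eq: 23}, by contrast, the objective will extend continuously across the whole sphere.

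First I would record that $P(b)$ depends continuously (indeed linearly) on $b$: from the explicit columns in \eqref{eq: analy.def.fk}, $f_n=b$ and $f_k=\big(A^{n-k}+\sum_{j=1}^{n-k}a_jA^{n-k-j}\big)b$ for $1\leqslant k\leqslant n-1$, so each entry of $P(b)$ is a linear form in $b$ and each entry of the symmetric matrix $P(b)P(b)^\top$ is a quadratic polynomial in $b$. Since $b\mapsto P(b)P(b)^\top$ is thus continuous on all of $\mathbb{S}^{n-1}$ -- in particular at those $b$ where the Kalman condition \eqref{eq: kalman.rank.condition} fails and $P(b)$ degenerates -- the continuity of eigenvalues of a matrix with continuous entries (cf. the footnote and \citep{kato2013perturbation}) will show that $b\mapsto\lambda_1\big(P(b)P(b)^\top\big)$ is continuous on the entire sphere.

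Next I would use that $P(b)P(b)^\top$ is positive semidefinite for every $b$, so $\lambda_1\big(P(b)P(b)^\top\big)\geqslant0$ with strict positivity exactly when $P(b)$ is invertible, i.e. when $(A,b)$ is controllable. As $A$ is similar to its companion matrix, Lemma \ref{lem: brunovski} produces at least one controllable direction, which after normalisation gives some $\tilde b\in\mathbb{S}^{n-1}$ with $\lambda_1\big(P(\tilde b)P(\tilde b)^\top\big)>0$. Applying the extreme value theorem to this continuous function on the compact set $\mathbb{S}^{n-1}$ then yields a maximiser $b^*$, and the strictly positive value attained at $\tilde b$ forces $\lambda_1\big(P(b^*)P(b^*)^\top\big)>0$. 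Hence $P(b^*)$ is invertible, $b^*$ is admissible, and it solves \eqref{eq: 23}; the equivalence in Lemma \ref{lem: spectral.reform}, through $\|P^{-1}(b^*)\|=\lambda_1\big(P(b^*)P(b^*)^\top\big)^{-1/2}$, then makes $b^*$ a solution of \eqref{eq: 18} as well.

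The hard part is conceptual rather than computational: it is precisely the non-compactness of the admissible set in \eqref{eq: 18}, where $\|P^{-1}(b)\|$ blows up as $b$ approaches the algebraic set on which controllability is lost. The merit of the reformulation \eqref{eq: 23} is that this blow-up is converted into the innocuous vanishing of $\lambda_1$, so the maximisation can be run over the full compact sphere while the degenerate locus is automatically excluded from the optimum.
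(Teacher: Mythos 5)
Your argument is correct and follows essentially the same route as the paper, which likewise deduces existence from the reformulation \eqref{eq: 23}, the polynomial (hence continuous) dependence of the entries of $P(b)P(b)^\top$ on $b$, the continuity of eigenvalues, and compactness of $\mathbb{S}^{n-1}$. Your additional check that the maximum value is strictly positive (so that $b^*$ is admissible for \eqref{eq: 18} as well) is a small but welcome piece of care that the paper leaves implicit.
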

	
	\noindent
	This result is a priori not evident when looking at the equivalent problem of minimizing the norm of the inverse of $P(b)$, but follows as a direct corollary.
	
	\section{Symmetries}
	
	A question which merits asking however, and which does not seem that obvious at first glance since it is not quite clear how one may study the convexity of $b\longmapsto P^{-1}(b)$ (or concavity of $b\longmapsto\lambda_1\left(P(b)P(b)^\top\right)$), is that of uniqueness of minimizers (or the lack thereof). 
	There is no reason as to why one may expect uniqueness. In fact, we prove the following result, which stipulates an invariance of the functional with respect to orthogonal transformations which commute with the system dynamics $A$. 
	
	\begin{proposition}[Invariants] \label{prop: invariants}
	Let $A\in\mathcal{M}_{n\times n}(\R)$ with $n\geqslant2$ be similar to its companion matrix, and let $\*R\in \mathcal{M}_{n\times n}(\R)$ be such that 
	\begin{enumerate}
	\item $[A, \*R]=A\*R-\*RA=0$ (i.e. $A$ and $\*R$ commute);
	\item $\*R\in \mathcal{M}_{n\times n}$ is orthogonal, meaning that $\*R\*R^\top=\*R^\top \*R=\mathrm{Id}_n$.
	\end{enumerate}
	Then we have that
	\begin{equation}
	\min_{b\in\mathbb{S}^{n-1}}\left\|P^{-1}(\*Rb)\right\|^2 = \min_{b\in\mathbb{S}^{n-1}}\left\|P^{-1}(b)\right\|^2.
	\end{equation}
	\end{proposition}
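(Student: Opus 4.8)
The plan is to prove the \emph{pointwise} identity $\left\|P^{-1}(\*Rb)\right\| = \left\|P^{-1}(b)\right\|$ for every $b\in\mathbb{S}^{n-1}$, from which the stated equality of minima follows at once (and which is in fact strictly stronger, since the equality of minima alone would hold for any orthogonal change of variable on the sphere). The starting point is the explicit description of the change-of-basis matrix furnished by Lemma \ref{lem: brunovski}: by \eqref{eq: analy.def.fk}, each column of $P(b)$ has the form $f_k = Q_k(A)\,b$, where $Q_n(A):=\mathrm{Id}_n$ and $Q_k(A):=A^{n-k}+\sum_{j=1}^{n-k}a_jA^{n-k-j}$ for $1\leqslant k\leqslant n-1$ are \emph{fixed} polynomials in $A$, depending only on the characteristic polynomial of $A$ and not on $b$.

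First I would invoke the commuting hypothesis $[A,\*R]=0$. Since $\*R$ commutes with $A$, it commutes with every polynomial in $A$, so $Q_k(A)\*R=\*R\,Q_k(A)$ for each $k$. Hence the $k$-th column of $P(\*Rb)$ equals $Q_k(A)(\*Rb)=\*R\,Q_k(A)b=\*R f_k$, and collecting the columns yields the matrix factorization
\begin{equation*}
P(\*Rb)=\*R\,P(b).
\end{equation*}
Inverting this identity (legitimate since $P(b)\in\mathrm{GL}_n(\R)$ by Lemma \ref{lem: brunovski}, while $\*R$ is orthogonal, hence invertible) and using $\*R^{-1}=\*R^\top$ gives $P^{-1}(\*Rb)=P^{-1}(b)\,\*R^\top$.

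It remains to pass to norms, and here the orthogonality hypothesis enters a second time. Right-multiplication by the orthogonal matrix $\*R^\top$ is an isometry for the operator norm: under the substitution $y=\*R^\top x$, which maps the unit sphere bijectively onto itself, one has $\sup_{\|x\|=1}\left\|P^{-1}(b)\*R^\top x\right\|=\sup_{\|y\|=1}\left\|P^{-1}(b)y\right\|$. Therefore $\left\|P^{-1}(\*Rb)\right\|=\left\|P^{-1}(b)\right\|$ for all $b$, and squaring and taking the infimum over $\mathbb{S}^{n-1}$ yields the claim.

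I do not anticipate a genuine obstacle: the argument reduces to an algebraic identity, with both hypotheses entering transparently --- commutativity produces the factorization $P(\*Rb)=\*R\,P(b)$, while orthogonality both turns $\*R^{-1}$ into $\*R^\top$ and supplies the norm invariance. The only point deserving a moment's care is recognizing that the column maps $b\mapsto f_k$ in \eqref{eq: analy.def.fk} are genuinely realized by polynomials in $A$ alone, which is precisely what allows $\*R$ to be commuted past them. I would close by stressing that it is this pointwise invariance --- showing that $b^*$ and $\*Rb^*$ carry the \emph{same} control cost --- which forces non-uniqueness of optimal actuators whenever $\*Rb^*\neq b^*$.
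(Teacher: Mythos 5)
Your argument is correct, and it rests on exactly the same two pillars as the paper's proof: commutativity of $\*R$ with the polynomials $p_k(A)$ that generate the columns of $P(b)$ via \eqref{eq: analy.def.fk}, and the norm-invariance supplied by orthogonality. Where you differ is in the packaging. The paper works at the level of the Gram matrix, writing $P(b)P(b)^\top=\sum_{k=1}^n p_k(A)bb^\top p_k(A)^\top$ and comparing Rayleigh quotients via the substitution $y=\*R^\top x$, which ties the proof to the variational reformulation \eqref{eq: 23} used throughout the paper. You instead extract the cleaner matrix identity $P(\*Rb)=\*R\,P(b)$ directly from the column formula and conclude by inverting; this is shorter, bypasses the Rayleigh machinery entirely, and yields a strictly sharper intermediate fact --- the full singular value spectrum of $P(\*Rb)$ coincides with that of $P(b)$, not merely the operator norm of the inverse --- which the paper's proof contains implicitly but never states. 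Both proofs in fact establish the pointwise invariance $\left\|P^{-1}(\*Rb)\right\|=\left\|P^{-1}(b)\right\|$, which is what forces the non-uniqueness discussed after the proposition. Two small points worth making explicit in your write-up: first, that $(A,\*Rb)$ satisfies the Kalman rank condition whenever $(A,b)$ does (immediate from $A^k\*Rb=\*RA^kb$ and the invertibility of $\*R$), so that $P(\*Rb)$ is indeed defined and invertible at the points where your identity is used; second, on the set where the Kalman condition fails both sides are $+\infty$, so the pointwise equality extends to all of $\mathbb{S}^{n-1}$ and the passage to the minimum is unobstructed.
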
	
	\medskip
	
	\noindent
	In other words, provided a minimizer $b^*$, one may, provided commutativity with $A$, rotate $b^*$ to obtain another minimizer $\*Rb^*$.
	
	For example, as seen in the numerical experiments in the following section, the finite-difference Dirichlet Laplacian in $n=2$:
	\begin{equation*}
	\begin{bmatrix}
	-2 & 1\\
	1 & -2
	\end{bmatrix},
	\end{equation*}
	commutes with the orthogonal matrices
	\begin{equation*}
	\begin{bmatrix}-1&0\\0&-1\end{bmatrix}, \begin{bmatrix}0&1\\1&0\end{bmatrix}, \begin{bmatrix}0&-1\\-1&0\end{bmatrix}.
	\end{equation*}
	
	\begin{proof}[Proof of Proposition \ref{prop: invariants}]
	We will make use of the characterization \eqref{eq: 2.26} -- \eqref{eq: 2.28} of the spectral norm of $P^{-1}(\cdot)$. In other words, we recall that since $P(\cdot)P(\cdot)^\top$ is a symmetric and positive definite matrix, we have that
	\begin{equation}
	\left\|P^{-1}(\cdot)\right\|^2 = \frac{1}{\lambda_1\left(P(\cdot)P(\cdot)^\top\right)},
	\end{equation}
	where $\lambda_1\left(P(\cdot)P(\cdot)^\top\right)$ denotes the smallest eigenvalue of $P(\cdot)P(\cdot)^\top$.
	Let us thus concentrate on investigating the invariance properties of $\lambda_1$. 
	
	Let $b\in\R^n$ be fixed. We recall that by the Rayleigh's min-max theorem, we have
	\begin{equation*}
	\lambda_1\left(P(b)P(b)^\top\right) := 
	 \min_{x \in \R^n \setminus\{0\}} \frac{\langle P(b)P(b)^\top x, x\rangle}{\|x\|^2}.
	 \end{equation*}
	 On another hand, making use of \eqref{eq: analy.def.fk}, we may see that  
	\begin{equation} \label{eq: P.rectangular.def}
	P(b) = \underbrace{\big[p_1(A)\,\, \ldots\,\, p_n(A)\big]}_{\in \mathcal{M}_{n\times n^2}(\R)} \underbrace{\begin{bmatrix} b & &\\
	 &\ddots & \\
	 & & b
	 \end{bmatrix}}_{\in\mathcal{M}_{n^2\times n}(\R)},
	\end{equation}
	where 
	\begin{equation}
	p_k(A) := \begin{dcases}A^{n-k} + \sum_{j=1}^{n-k} a_{j} A^{n-k-j} &\text{ for } k\leqslant n-1,\\
	\text{Id} &\text{ for } k=n.
	\end{dcases}
	\end{equation}
	After some computations using \eqref{eq: P.rectangular.def}, we can deduce that
	\begin{equation} \label{eq: pp*.char}
	P(b)P(b)^\top = \sum_{k=1}^n p_k(A)bb^\top p_k(A)^\top.
	\end{equation}
	The above representation combined with the Rayleigh quotient characterization yield
	\begin{align*}
	\lambda_1\left(P(b)P(b)^\top\right) &:= \min_{x \in \R^n \setminus\{0\}} \sum_{k=1}^n \frac{\left\langle p_k(A)bb^\top p_k(A)^\top x, x\right\rangle}{\|x\|^2}\\
	 &=  \min_{x \in \R^n \setminus\{0\}} \sum_{k=1}^n \frac{\left\langle b^\top p_k(A)^\top x, b^\top p_k(A)^\top x\right\rangle}{\|x\|^2} \\
	 &= \min_{x \in \R^n \setminus\{0\}} \sum_{k=1}^n \frac{\left\|(p_k(A)b)^\top x\right\|^2}{\|x\|^2}.
	\end{align*}
	Now since $[A, \*R]=0$ we clearly also have $[p_k(A), \*R]=0$ for $k\leqslant n$. Whence for $x\in\R^n$,
	\begin{align*}
	\left\|(p_k(A)\*Rb)^\top x \right\|^2 &= \left\|(\*Rp_k(A)b)^\top x \right\|^2 = \left\|(p_k(A)b)^\top \*R^\top x \right\|^2
	\end{align*}
	holds. Since $\*R^\top$ is orthogonal, 
	\begin{equation*}
	\frac{\left\|(p_k(A)\*Rb)^\top x \right\|^2}{\|x\|^2} =  \frac{\left\|(p_k(A)b)^\top \*R^\top x \right\|^2}{\|\*R^\top x\|^2}.
	\end{equation*}
	Clearly, since $\*R$ is invertible, 
	\begin{align*}
	&\min_{y \in \R^n \setminus\{0\}} \sum_{k=1}^n \frac{\left\|(p_k(A)b)^\top y\right\|^2}{\|y\|^2} = \min_{x \in \R^n \setminus\{0\}} \sum_{k=1}^n   \frac{\left\|(p_k(A)b)^\top \*R^\top x \right\|^2}{\|\*R^\top x\|^2},
	\end{align*}
	whence we may conclude the proof.
	\end{proof}

	\section{Numerical experiments} \label{sec: numerics} 
	
	We henceforth provide a brief numerical study of the optimization problem. We focus on the reformulation provided by \eqref{eq: 23}, which we recall consists in solving
	\begin{equation}
	\max_{b\in\mathbb{S}^{n-1}} \lambda_1\left(P(b)P(b)^\top\right) = \max_{b\in\mathbb{S}^{n-1}} \min_{x\in \R^n\setminus\{0\}} \frac{\left\langle P(b)P(b)^\top x,x\right\rangle}{\|x\|^2}.
	\end{equation}
	We recall the synthetic definition of $P(b)$ and characterization of $P(b)P(b)^\top$ in \eqref{eq: P.rectangular.def} and \eqref{eq: pp*.char}, respectively. Given a matrix $A\in\mathcal{M}_{n\times n}(\R)$ which is similar to its companion matrix, we shall solve numerically the above optimization problem (i.e. find some maximizer $b^*\in\R^n$) by using 

	\begin{itemize}
	\item \textbf{Case $n=2$:} The \texttt{IPOPT} method via \texttt{CasADi} (\citep{andersson2019casadi}) in \texttt{Matlab}.\footnote{see \href{https://github.com/borjanG/optimal.controller}{\textcolor{dukeblue}{\texttt{https://github.com/borjanG/optimal.controller}}}. Experiments were conducted on a personal MacBook Pro laptop (2.4 GHz Quad-Core Intel Core i5, 16GB RAM, Intel Iris Plus Graphics 1536 MB).}
	We make use of the power iteration algorithm to find the smallest eigenvalue of the symmetric, positive-definite matrix $P(b)P(b)^\top$ by a simple spectral shift: we first find the largest eigenvalue $\lambda_{\max}$, and then find the largest eigenvalue of $P(b)P(b)^\top-\lambda_{\max}$; the sum of both resulting eigenvalues yields the desired smallest eigenvalue. We emphasize the necessity of not using a pre-defined routine for computing the eigenvalue, due to the fact that automatic differentiation requires a graph-like object to be able to differentiate and obtain gradients, and traceability with respect to the optimization variable is in general not provided in a pre-defined routine. 
	\smallskip 
	
	\item \textbf{Case $n\geqslant3$:} Due to a lack of convergence of \texttt{IPOPT} for $n\geqslant3$, which could be due to non-concavity, we make use of an evolutionary algorithm\footnote{We thank Emmanuel Trélat for this insight and suggestion.}. Namely, we use the \emph{differential evolution} algorithm implemented in \texttt{SciPy} (\citep{storn1997differential}). (Such obstacles have been encountered -- and bypassed -- by use of a genetic in related works, see \citep{hebrard2003optimal, freitas1999optimizing}.)
	\end{itemize} 
	
	\noindent
	The algorithms suffer from a curse of dimensionality and are, at least for the  examples presented below, providing answers up to $n\leqslant10$ (an optimization run for $n=10$ took around $8h$ on a personal computer). We provide three basic experiments to motivate possible characterizations of optimal solutions depending on the symmetry properties of the system dynamics $A$. 
	
	\begin{remark} The likely cause of the lack of convergence of gradient-based methods is the lack of concavity of the functional $b\mapsto\lambda_1\left(P(b)P(b)^\top\right)$. Let us briefly comment on this artifact.  By using the Rayleigh characterization of $\lambda_1$, we see that to differentiate one needs to inject derivatives inside the $\min$. Formally applying Danskin's theorem (\citep{danskin1966theory}), to differentiate $b\mapsto\lambda_1\left(P(b)P(b)^\top\right)$ it would roughly suffice to differentiate the map $\Psi: b\mapsto \langle Mbb^\top M^\top x, x\rangle$ for fixed $x\in\R^n$, where $M\in\mathcal{M}_{n\times n}(\R)$ is fixed. In essence, this reduces to differentiating the square matrix $bb^\top\in\mathcal{M}_{n\times n}(\R)$ with respect to $b$ -- a first differentiation yields a $3$-tensor $\*D^1\in\R^{n\times n\times n}$ where $\*D^1_{k, j, \ell} = \del_{b_\ell} (bb^\top)_{j, k} = b_j\delta_{\ell,k}+b_k\delta_{\ell,j}$, where $\delta_{j,k}$ denotes the Kronecker delta. A second differentiation would yield a $4$-tensor $\*D^2\in\R^{n\times n \times n \times n}$, where $\*D^2_{j,k,\ell,r} = \del_{b_r}\left(\*D^1_{k, j, \ell} \right) = \delta_{r,j}\delta_{\ell,k} + \delta_{r,k}\delta_{j,\ell}$. This would mean that the Hessian of $\Psi$ is very sparse and possibly not negative-definite.
	\end{remark}
	
	\begin{example}[Heat equation with lumped control] \label{ex: 1}
	
	We begin this section by considering a finite difference discretization of the one-dimensional heat equation
	\begin{equation*}
	\begin{dcases}
	y_t(t,x) - y_{xx}(t,x) = b(x) u(t) &\text{ in } (0,T)\times (0,1),\\
	y(t,0) = y(t,1) = 0 &\text{ in } (0,T).\\
	\end{dcases}
	\end{equation*}
	Here $b(x)\in\R$ is a scalar function designating the location wherein the controller actuates with amplitude $u(t)$ in each time $t$.
	By using the classical two-point difference scheme for approximating the second derivative, we obtain the system 
	\begin{equation}
	y_h'(t) - A_{\Delta,h}y_h(t) = b_h u(t) \hspace{1cm} \text{ in } (0,T).
	\end{equation}
	Here $h=\frac{1}{n-1}$ where $n\geqslant2$ represents the number of spatial grid points, with $b_h\in \R^n$ representing the optimization variable, and
	\begin{equation*}
	A_{\Delta,h} := 
	\frac{1}{h^2}\begin{bmatrix} 
           -2& 1& 0& \ldots& 0 \\ 
           1& -2& 1& & \vdots \\
           0& \ddots &\ddots& \ddots& 0 \\
           \vdots& & 1 & -2 & 1 \\
           0& \hdots &0 &1 & -2 
    \end{bmatrix}
	\end{equation*}
	being the standard finite-difference discretization of the Dirichlet Laplacian. 
	
	Let us henceforth address a couple of illustrative cases. We provide illustrations of the results in Figure \ref{fig: ex1.1} and Figure \ref{fig: ex1.2}.
	\smallskip
	
	\noindent \textbf{Case 1):}
	$(n=2)$.
	We shall begin by focusing our attention on the case $n=2$, and thus consider 
	\begin{equation*}
	A_{\Delta} = 
	\begin{bmatrix} 
	-2 & 1\\
	1 & -2
	\end{bmatrix}, \hspace{1cm} b = \begin{bmatrix}b_1\\b_2\end{bmatrix}.
	\end{equation*} 
	In this case, several computations can be done explicitly. Indeed, first note that 
	\begin{align*}
	P(b)P(b)^\top=\begin{bmatrix}
	(2b_1+b_2)^2+b_1^2 & (2b_1+b_2)(b_1+2b_2)+b_1b_2 \\
	(2b_1+b_2)(b_1+2b_2)+b_1b_2 & (b_1+2b_2)^2+b_2^2
	\end{bmatrix},
	\end{align*}
	whence 
	\begin{align*}
	\lambda_1\Big(P(b)P(b)^\top&\Big)= 4b_1b_2+3\big(b_1^2+b_2^2\big)-2\Big(\big(2b_1^2+2b_1b_2+b_2^2\big)\big(b_1^2+2b_1b_2+2b_2^2\big)\Big)^{\frac{1}{2}}.
	\end{align*}
	Making use of Lagrange multipliers and symbolic computation, one can find that the above function has $4$ maximizers.
	Numerically, we find the following $4$ maximizers: 
	\begin{align} \label{eq: ex1.1.max}
	b^* = \begin{bmatrix} b_1^*\\ b_2^*\end{bmatrix} \in \left\{\begin{bmatrix} -0.257983\\0.96614944\end{bmatrix}, \begin{bmatrix} 0.257983\\-0.96614944\end{bmatrix}, \begin{bmatrix} 0.96614944\\-0.257983\end{bmatrix}, \begin{bmatrix} -0.96614944\\0.257983\end{bmatrix} \right\}.
	\end{align}
	We depict these maximizers on $\mathbb{S}^1$ in Figure \ref{fig: ex1.1}.
	Interestingly enough, we see that
	\begin{align*}
	\begin{bmatrix}-1&0\\0&-1\end{bmatrix}\begin{bmatrix} 0.96614944\\-0.257983\end{bmatrix} &= \begin{bmatrix} -0.96614944\\0.257983\end{bmatrix}\\
	\begin{bmatrix}0&1\\1&0\end{bmatrix}\begin{bmatrix} 0.96614944\\-0.257983\end{bmatrix} &= \begin{bmatrix} -0.257983\\0.96614944\end{bmatrix}\\
	\begin{bmatrix}0&-1\\-1&0\end{bmatrix}\begin{bmatrix} 0.96614944\\-0.257983\end{bmatrix} &= \begin{bmatrix} 0.257983\\-0.96614944\end{bmatrix},
	\end{align*}
	whence one may generate all the maximizers from $[0.96614944, -0.257983]^\top$ and applying the orthogonal (rotation) matrices appearing in the identities just above, all of which commute with $A_\Delta$. This may also be seen in Figure \ref{fig: ex1.1}.
		
	\begin{figure}
	\includegraphics[scale=0.42]{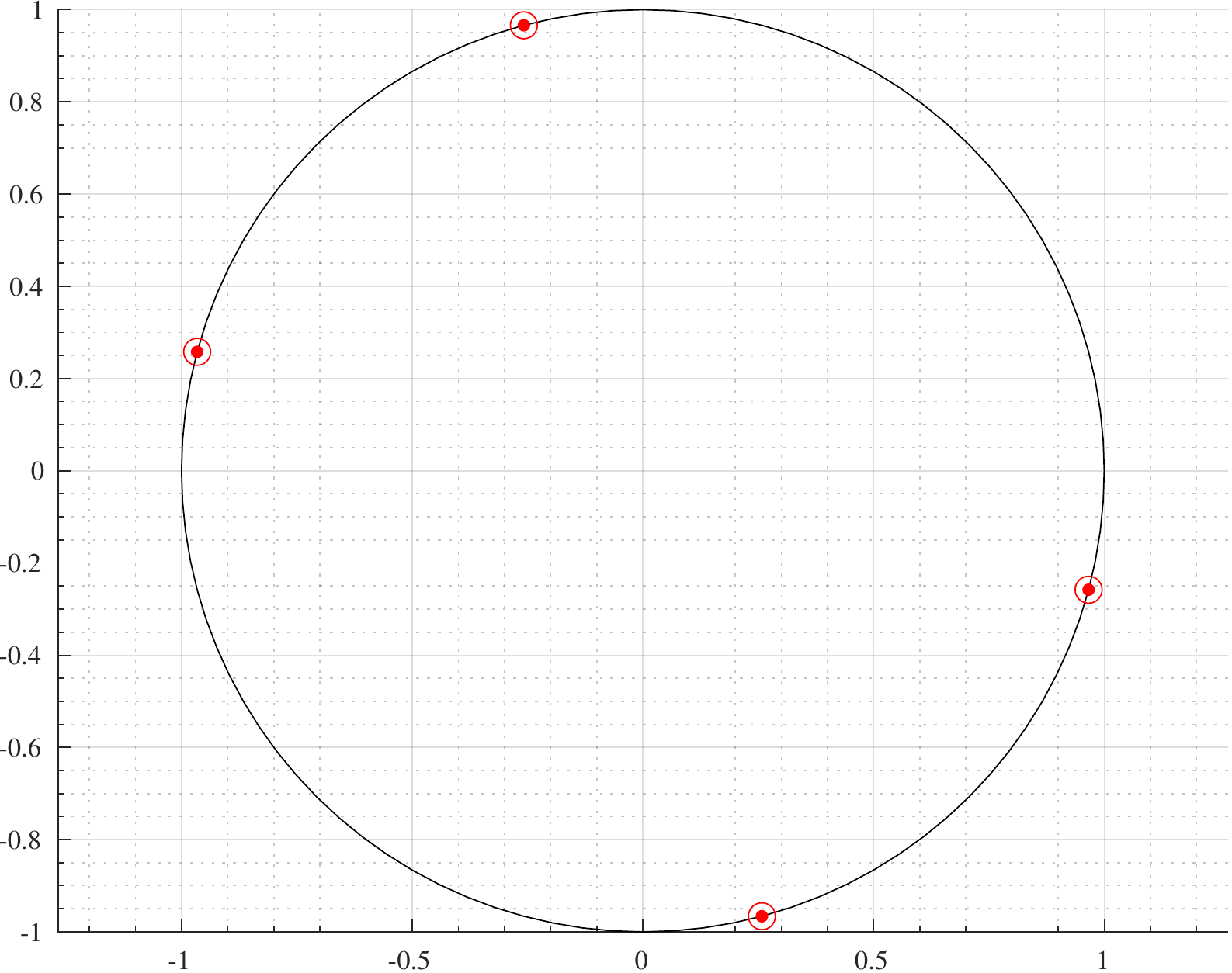}
	\hspace{0.1cm}
	\includegraphics[scale=0.42]{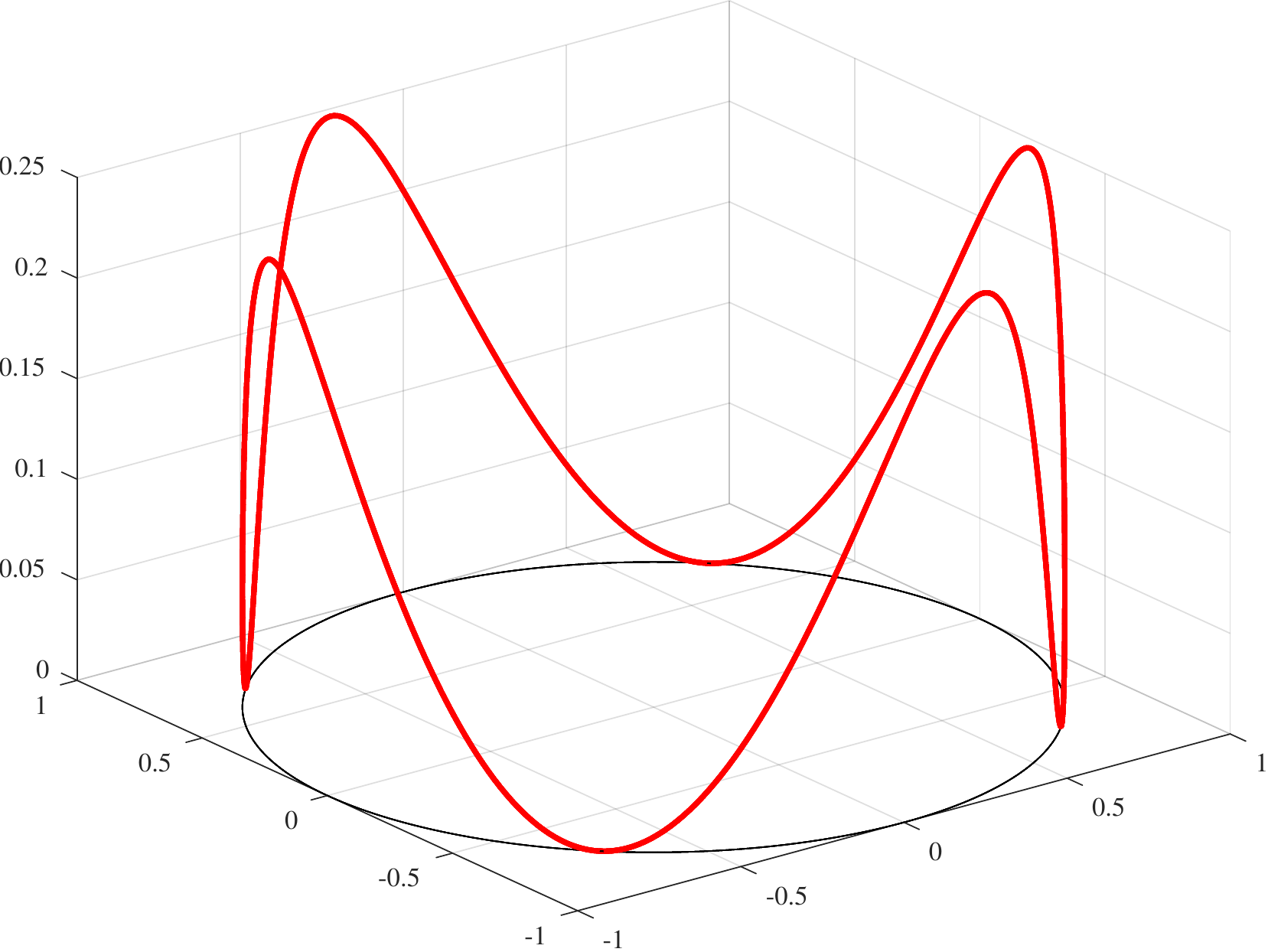}
	\caption{\textbf{Example \ref{ex: 1}} ($n=2$). \emph{Left}: The $4$ maximizers on $\mathbb{S}^1$ found by the \texttt{IPOPT} algorithm, as indicated in \eqref{eq: ex1.1.max}. \emph{Right}: the graph of the function $\mathbb{S}^1\ni b\mapsto\lambda_1(P(b)P(b)^\top)$, wherein we see 1). the maximum equal to $0.24913$ attained at the computed maximizers located on the left plot; 2). the zeros are attained at points which do not satisfy the Kalman rank condition, which are precisely the $4$ points with $|b_1|=|b_2|=\frac{\sqrt{2}}{2}$; 3). the rotational symmetry of the cost functional.}
	\label{fig: ex1.1}
	\end{figure}
	
	\smallskip
	\noindent \textbf{Case 2):} $(n=3)$. We also provide the numerical results in the case $n=3$, and depict the functional to be maximized in Figure \ref{fig: ex1.2}.  We numerically find the following $8$ maximizers: 
	\begin{align} \label{eq: ex1.2.max}
	b^* \in \Bigg\{&\begin{bmatrix} -0.7633\\ 0.6325\\0.1311\end{bmatrix}, 
\begin{bmatrix} -0.1311\\ 0.6325\\ -0.7633\end{bmatrix},
\begin{bmatrix} -0.1311\\ -0.6325\\ 0.7633\end{bmatrix},\begin{bmatrix} 0.7633\\ -0.6325\\ -0.1311\end{bmatrix},\\
&\begin{bmatrix} -1.346*10^{-7}\\ 0.44707\\ -0.8944\end{bmatrix}, \begin{bmatrix} 4.975*10^{-7}\\ -4.44707\\ 0.8944\end{bmatrix}, \begin{bmatrix} -9.089*10^{-8}\\ 0.44707\\ -0.8944\end{bmatrix}, 
\begin{bmatrix} -4.8519*10^{-8}\\ 0.44707\\-0.8944\end{bmatrix} \Bigg\} \nonumber.
	\end{align}
	We again note a similar rotational symmetry among the obtained maximizers. The latter can be visualized as the peaks in brightly colored patches in Figure \ref{fig: ex1.2}. We do not conjecture that these maximizers are the sole ones that the functional possesses, as the yellow patches appearing in Figure \ref{fig: ex1.2} could contain multiple peaks.
	
	\begin{SCfigure}
	\includegraphics[scale=0.5]{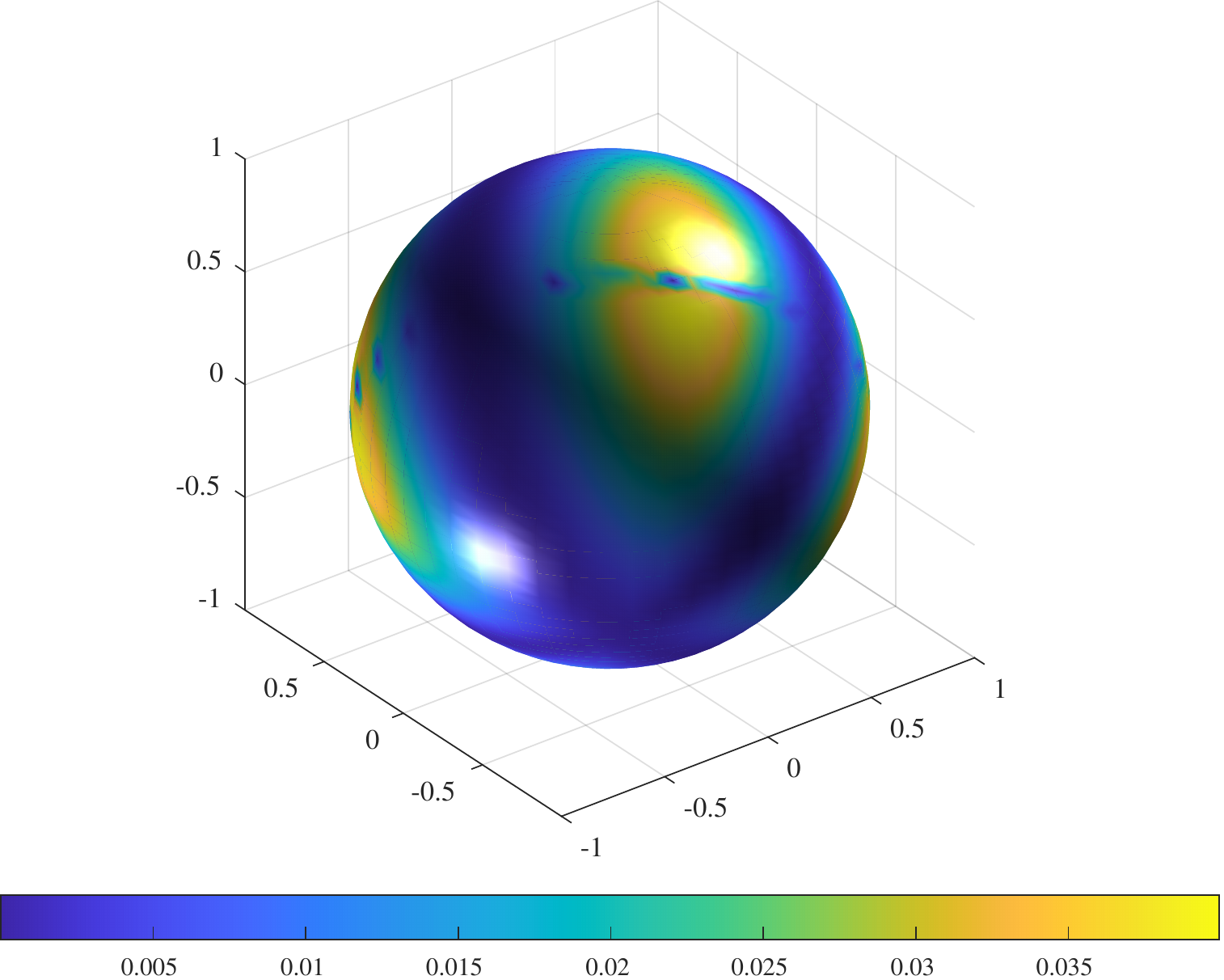}	
	\caption{\textbf{Example \ref{ex: 1}} ($n=3$). The functional $b\mapsto \lambda_1(P(b)P(b)^\top)$ on $\mathbb{S}^2$; the opposite side of the sphere manifests the same pattern. We dispose of $8$ maximizers at which the maximum value equal to $\sim0.0399$ is attained. 
	Rotational symmetry is also apparent.}
	  \label{fig: ex1.2}
	  \end{SCfigure}
	\end{example}
	
	\begin{example}[Wave equation with lumped control] \label{ex: 3}
	
	We now consider a finite-difference discretization of the one-dimensional wave equation with lumped control: 
	\begin{equation*}
	\begin{dcases}
	z_{tt}(t,x) - z_{xx}(t,x) = b(x)u(t) &\text{ in } (0,T)\times(0,1),\\
	z(t,0) = z(t,1) = 0 &\text{ in } (0,T).
	\end{dcases}	
	\end{equation*}
	By setting $y:=[z, z_t]^\top$, we rewrite the equation in the above system in the canonical first-order form as
	\begin{equation*}
	y_t(t,x) - \begin{bmatrix}0&\mathrm{Id}\\ \del_x^2&0\end{bmatrix}y(t,x) = \begin{bmatrix}0\\b(x)\end{bmatrix} u(t) \, \text{ in } (0,T)\times(0,1).
	\end{equation*}
	When the Dirichlet Laplacian is discretized as in the previous examples, we find ourselves with a linear control system in $\R^{2n}$, with system dynamics
	\begin{equation*}
	A_{\square, h}:= \begin{bmatrix} 0&\mathrm{Id}_n\\
						      A_{\Delta,h}&0
	\end{bmatrix}
	\end{equation*}
	with $A_{\Delta, h}$ as in Example \ref{ex: 1}. We depict the shape of the functional $b\mapsto \lambda_1(P(b)P(b)^\top)$ in Figure \ref{fig: ex3.1} $(n=2)$ and Figure \ref{fig: ex3.2} $(n=3)$. 
	We in fact see that the functional is identical to that of the heat case, thus the found maximizers are as well. This is due to the following result.
	
	\begin{proposition} \label{prop: wave=heat} Let $P_{\square}(b)\in\mathrm{GL}_{2n}(\R)$ denote the change-of-basis matrix for $A_{\square,h}\in\mathcal{M}_{2n\times 2n}$, and $P_{\Delta}(b)\in\mathrm{GL}_n(\R)$ that for $A_{\Delta,h}\in\mathcal{M}_{n\times n}(\R)$. Then 
	\begin{equation}
	P_\square(b)P_\square(b)^\top = \begin{bmatrix} P_{\Delta}(b)P_{\Delta}(b)^\top & 0\\ 0 & P_{\Delta}(b)P_{\Delta}(b)^\top\end{bmatrix}.
	\end{equation}
	Consequently, $\lambda_1\left(P_\square(b)P_\square(b)^\top\right) = \lambda_1\left(P_{\Delta}(b)P_{\Delta}(b)^\top\right)$.
	\end{proposition}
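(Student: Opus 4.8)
The plan is to reduce everything to the representation \eqref{eq: pp*.char}, which writes $P(b)P(b)^\top = \sum_k (p_k(A)b)(p_k(A)b)^\top$, and to exploit that the control vector of the discretized wave system is the lumped vector $\hat{b} = [0,\, b]^\top \in \R^{2n}$. First I would pin down the characteristic polynomial of $A_{\square,h}$. Writing $A_{\square,h} = \begin{bmatrix} 0 & \mathrm{Id}_n \\ A_{\Delta,h} & 0\end{bmatrix}$ and using the block-determinant identity (legitimate since $A_{\Delta,h}$ commutes with $x\,\mathrm{Id}_n$), one gets $\det(x\,\mathrm{Id}_{2n} - A_{\square,h}) = \det(x^2\,\mathrm{Id}_n - A_{\Delta,h}) = q(x^2)$, where $q$ is the monic characteristic polynomial of $A_{\Delta,h}$, with coefficients $\{a_i\}$ as normalized in Lemma \ref{lem: brunovski}. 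Consequently, if $q(\mu) = \mu^n + a_1\mu^{n-1} + \cdots + a_n$, the coefficients $\{b_j\}_{j=1}^{2n}$ of the characteristic polynomial of $A_{\square,h}$ satisfy $b_{2i} = a_i$ and $b_{2i-1} = 0$: only even powers survive.

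Next I would compute the powers of $A_{\square,h}$, noting that $A_{\square,h}^{2m} = \mathrm{diag}(A_{\Delta,h}^m, A_{\Delta,h}^m)$ is block-diagonal while $A_{\square,h}^{2m+1}$ is block-antidiagonal. Substituting these into the definition of $p_k(A_{\square,h})$ and using that only even-indexed $b_j$ are nonzero, the defining sum splits cleanly by the parity of $k$: for $k=2\ell$ one obtains $p_{2\ell}(A_{\square,h}) = \mathrm{diag}\big(p_\ell(A_{\Delta,h}),\, p_\ell(A_{\Delta,h})\big)$, while for $k = 2\ell-1$ one obtains that $p_{2\ell-1}(A_{\square,h})$ is block-antidiagonal with blocks $p_\ell(A_{\Delta,h})$ and $A_{\Delta,h}\,p_\ell(A_{\Delta,h})$, for $\ell = 1, \ldots, n$.

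Applying these to $\hat{b} = [0,\, b]^\top$ collapses both families onto a single vector $v_\ell := p_\ell(A_{\Delta,h})\,b \in \R^n$: the even index gives $p_{2\ell}(A_{\square,h})\hat{b} = [0,\, v_\ell]^\top$ and the odd index gives $p_{2\ell-1}(A_{\square,h})\hat{b} = [v_\ell,\, 0]^\top$ (the bottom antidiagonal block, carrying the extra $A_{\Delta,h}$, is annihilated by the vanishing top entry of $\hat{b}$). I would then assemble $P_\square(b)P_\square(b)^\top = \sum_{k=1}^{2n} (p_k(A_{\square,h})\hat{b})(p_k(A_{\square,h})\hat{b})^\top$: the odd-$k$ terms contribute $\mathrm{diag}(v_\ell v_\ell^\top,\, 0)$, the even-$k$ terms contribute $\mathrm{diag}(0,\, v_\ell v_\ell^\top)$, and summing over $\ell$ produces $\mathrm{diag}\big(\sum_\ell v_\ell v_\ell^\top,\, \sum_\ell v_\ell v_\ell^\top\big)$. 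Recognizing $\sum_\ell v_\ell v_\ell^\top = P_\Delta(b)P_\Delta(b)^\top$ via \eqref{eq: pp*.char} yields the claimed block structure, and the eigenvalue statement is immediate since a block-diagonal matrix with two identical diagonal blocks shares its spectrum (with doubled multiplicities) with that block.

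The main obstacle I anticipate is the index bookkeeping: one must simultaneously track the parity-dependent vanishing of the $b_j$ and the even/odd power structure of $A_{\square,h}$, and verify that the one-step index shift ($k = 2\ell$ versus $k = 2\ell-1$) is exactly what makes both families reduce to the same $p_\ell(A_{\Delta,h})$. A secondary point worth confirming is that $(A_{\square,h}, \hat{b})$ is controllable precisely when $(A_{\Delta,h}, b)$ is, so that \eqref{eq: pp*.char} applies and $P_\square(b)$ is the genuine Brunovsky matrix; this is consistent with the final identity, since $\mathrm{diag}(P_\Delta P_\Delta^\top, P_\Delta P_\Delta^\top)$ is invertible if and only if $P_\Delta P_\Delta^\top$ is.
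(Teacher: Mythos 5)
Your proposal is correct and follows essentially the same route as the paper: split the sum $\sum_{k=1}^{2n} p_k(A_{\square})\hat{b}\hat{b}^\top p_k(A_{\square})^\top$ by the parity of $k$, use that $A_{\square}^{2m}$ is block-diagonal and $A_{\square}^{2m+1}$ block-antidiagonal together with the vanishing of the odd-indexed coefficients of the characteristic polynomial of $A_{\square}$, and observe that the index shift $k=2\ell$ versus $k=2\ell-1$ collapses both families onto $p_\ell(A_{\Delta})b$. Your closing remarks on controllability and on the extra factor $A_{\Delta}$ in the bottom antidiagonal block being annihilated by the zero top entry of $\hat{b}$ are accurate and consistent with the paper's computation.
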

	
	\begin{figure}	
	\includegraphics[scale=0.35]{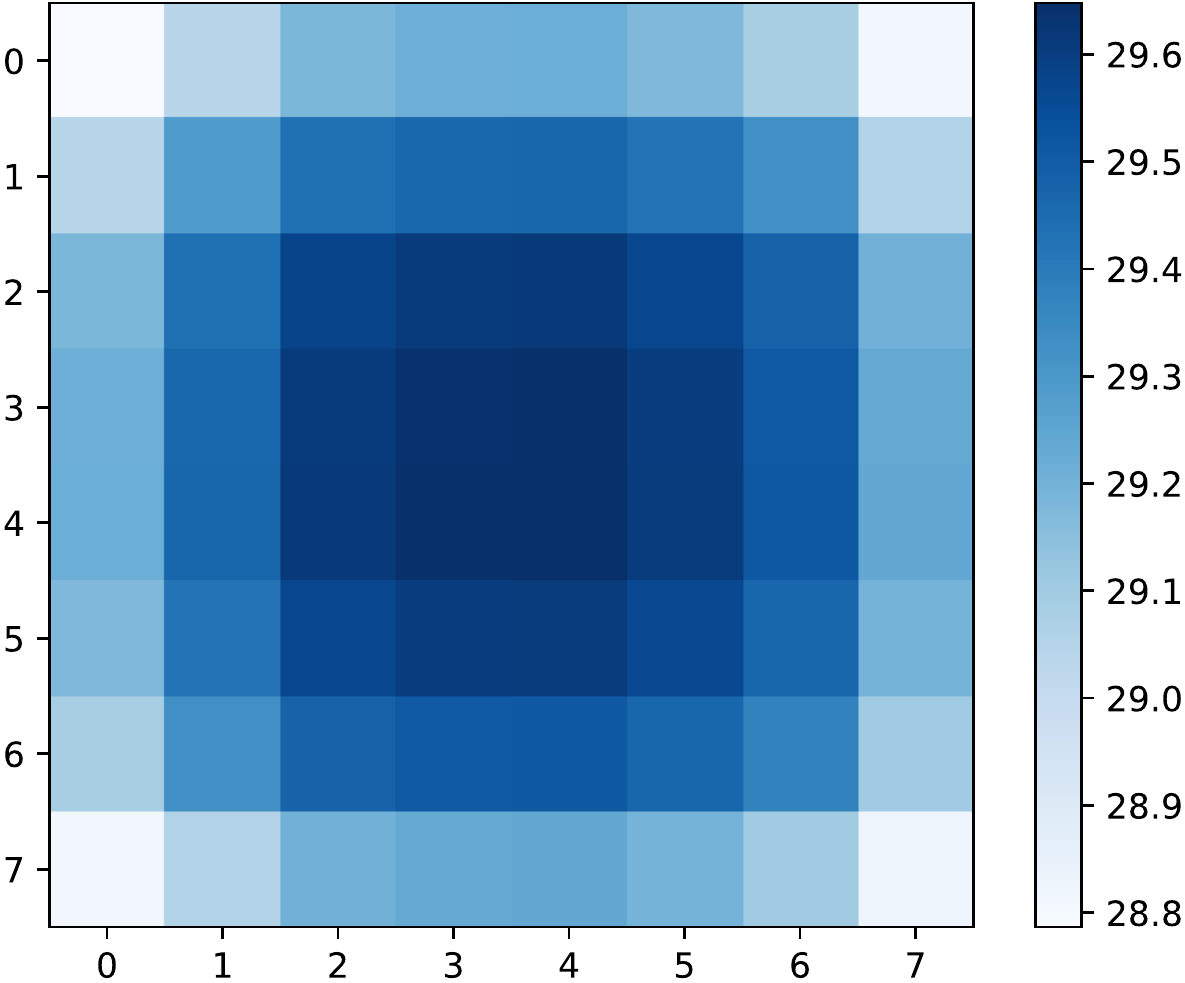}
	\hspace{0.1cm}
	\includegraphics[scale=0.35]{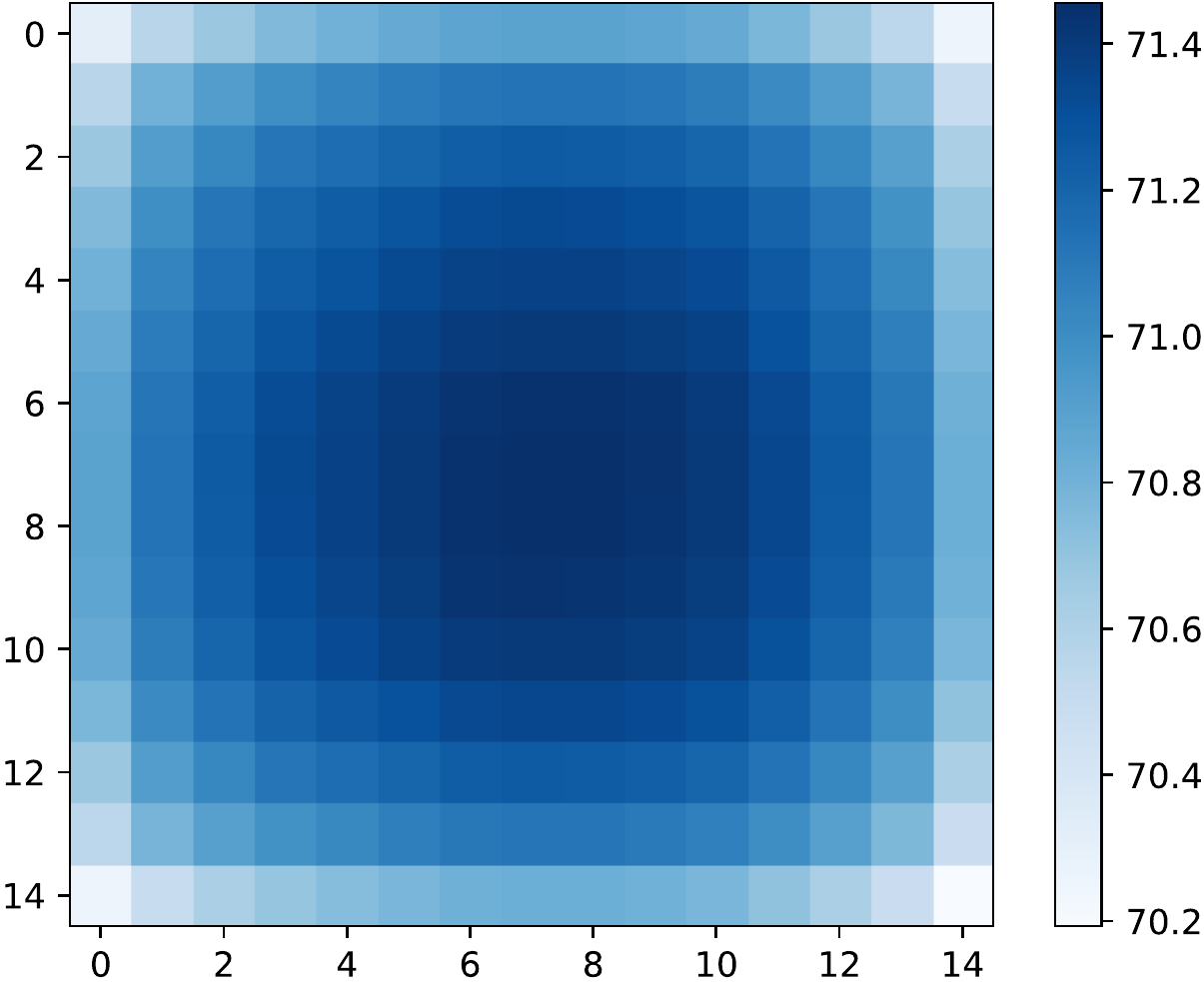}
	\hspace{0.1cm}
	\includegraphics[scale=0.35]{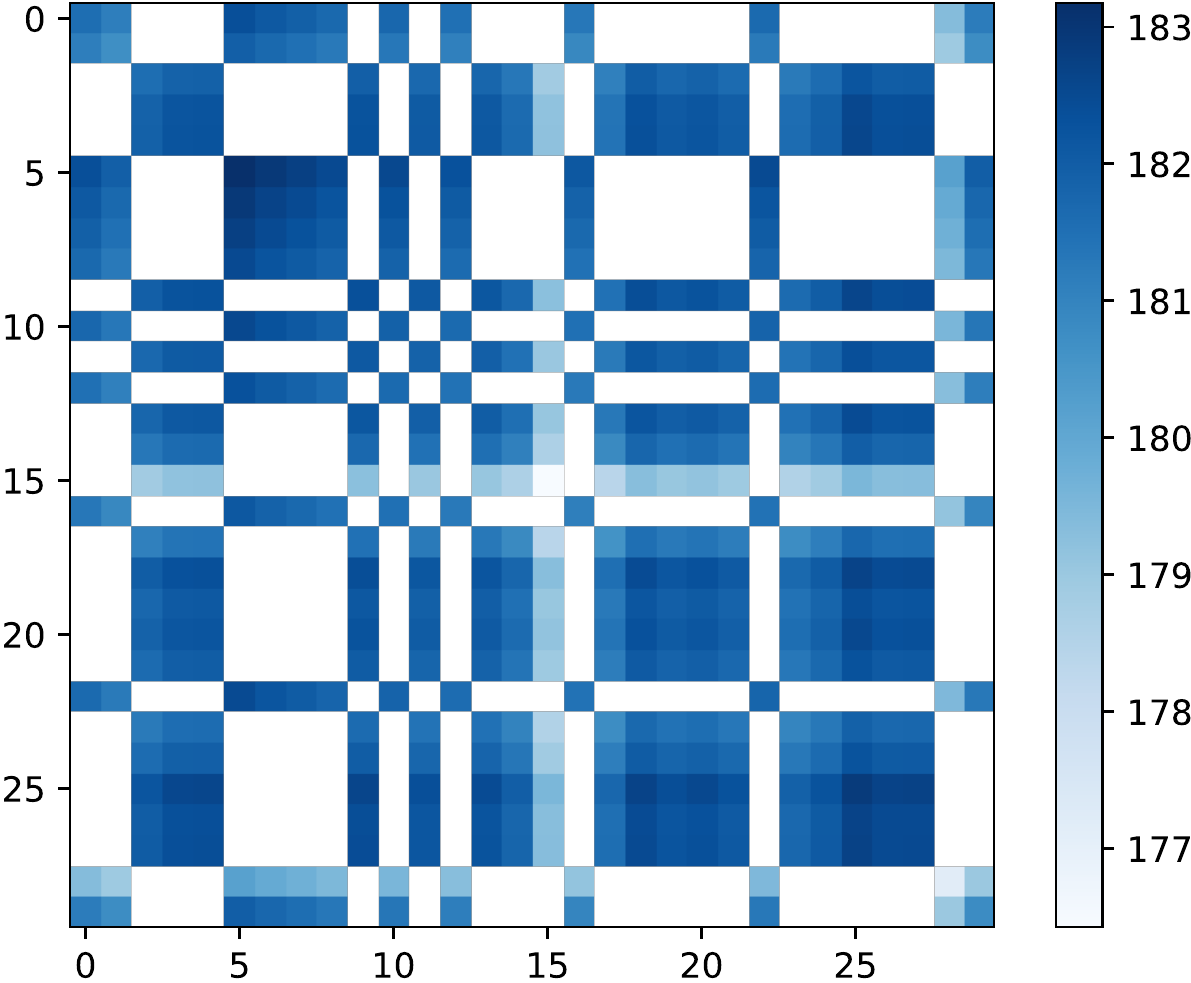}
	\vspace{0.25cm}
	
	\includegraphics[scale=0.35]{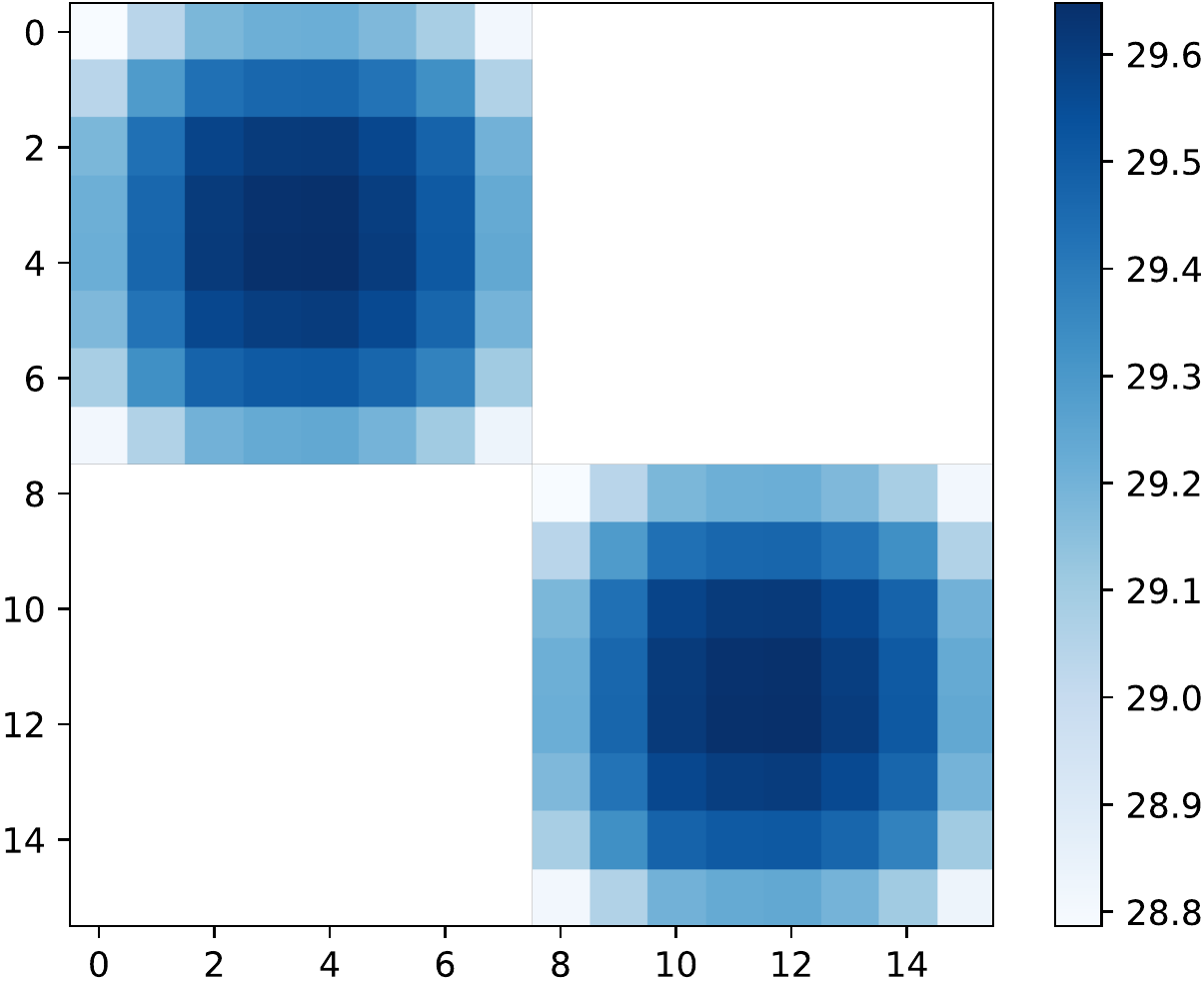}
	\hspace{0.1cm}
	\includegraphics[scale=0.35]{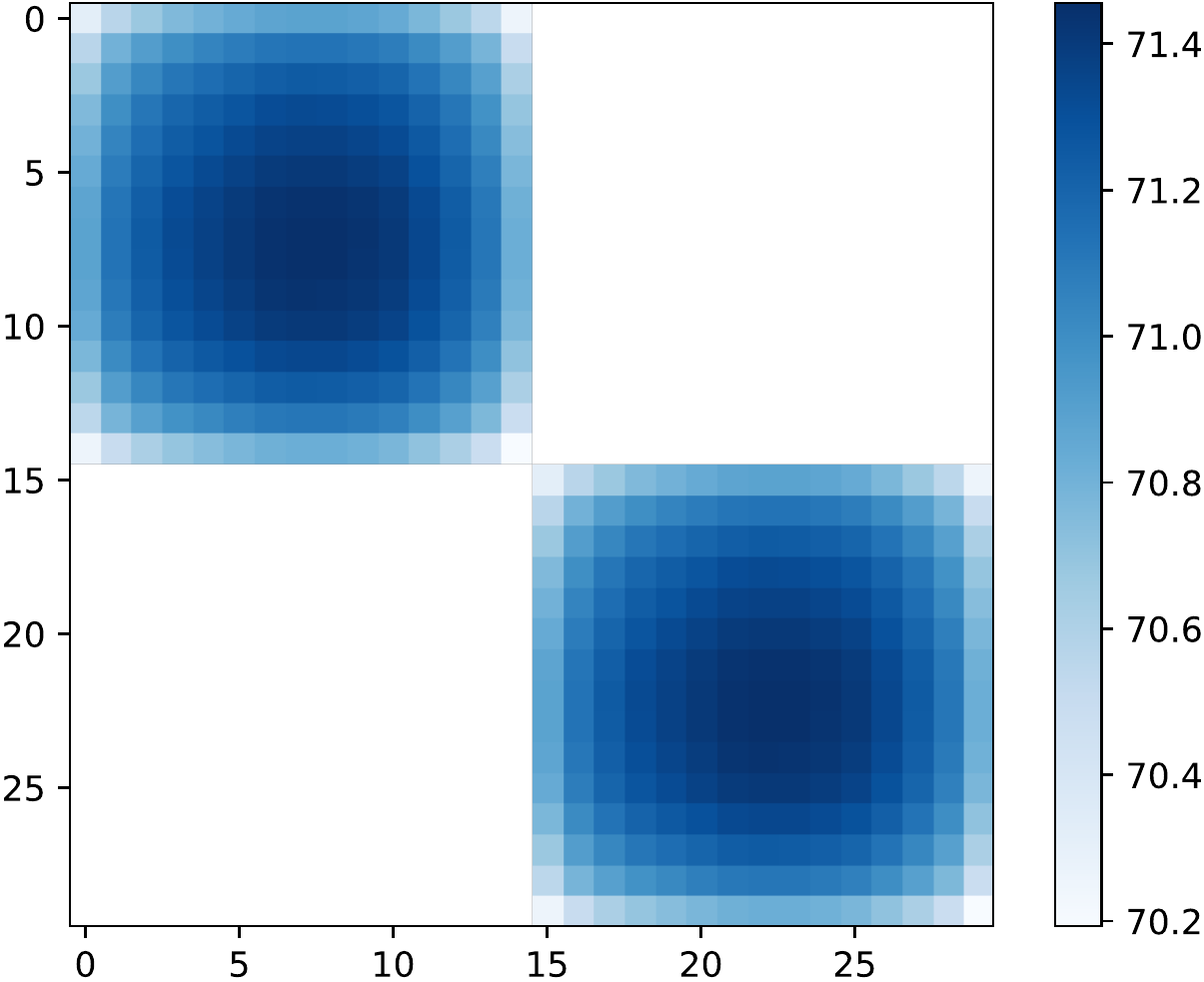}
	\hspace{0.1cm}
	\includegraphics[scale=0.35]{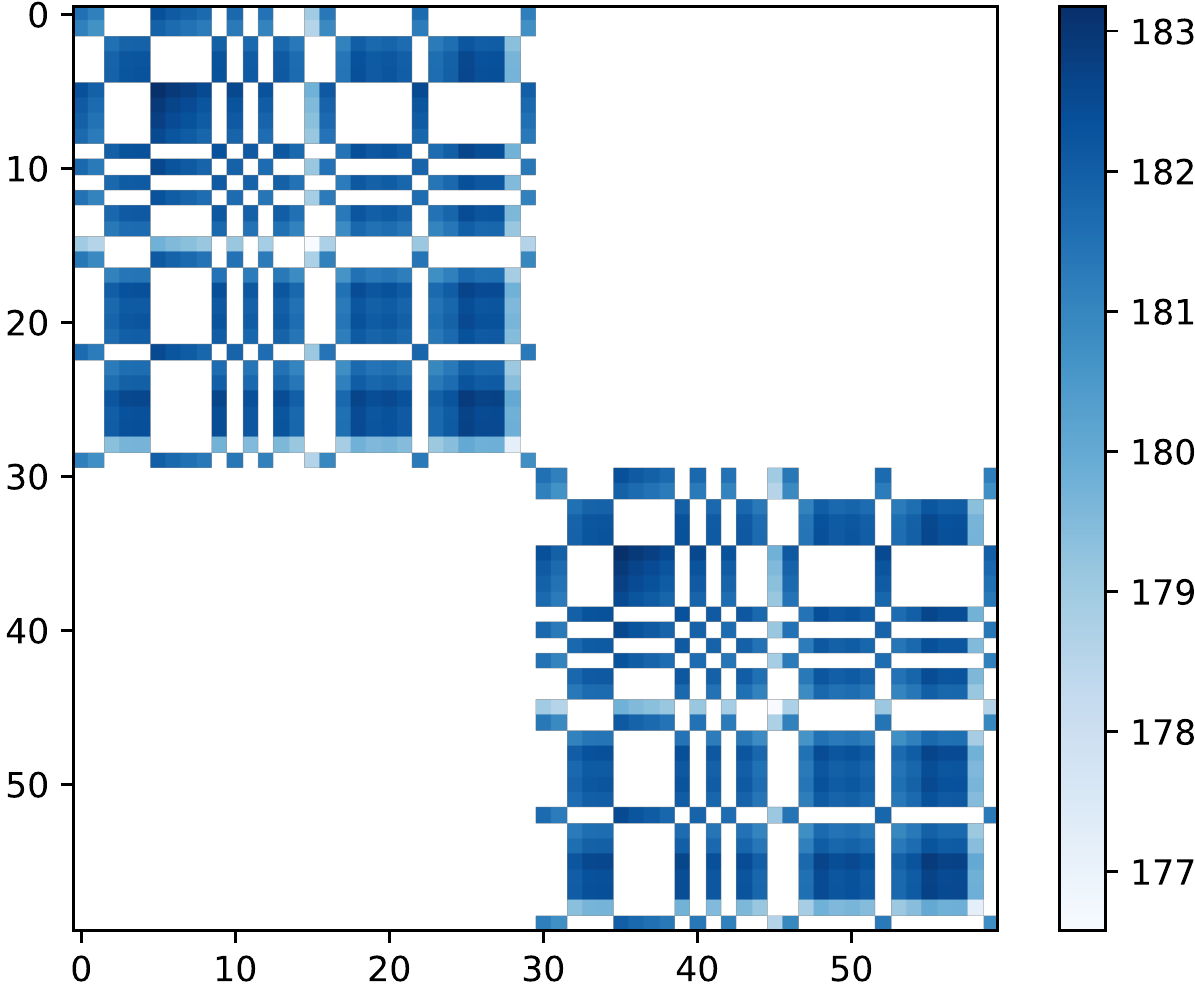}
	\caption{Graphical depiction of Proposition \ref{prop: wave=heat}: we display $P_{\Delta}(b)P_{\Delta}(b)^\top$ (top) and $P_{\square}(b)P_{\square}(b)^\top$ (bottom) for $n\in\{8, 15, 30\}$, with $b$ picked at random per each selected $n$. More precisely, we display the $\log_{10}$ of these matrices to enhance visibility. An interesting pattern starts to appear for $n\geqslant26$ as seen on the rightmost figures, likely due to dimensionality.}
	\end{figure}
	
	\begin{proof}[Proof of Proposition \ref{prop: wave=heat}]
	We begin by recalling that (we drop the indexes $h$)
	\begin{equation*}
	P_\square(b)P_\square(b)^\top = \sum_{k=1}^{2n} p_k(A_{\square}) \begin{bmatrix} 0 & 0\\
	0 & bb^\top\end{bmatrix} p_k(A_{\square})^\top,
	\end{equation*}
	with 
	\begin{equation*}
	p_k(A_{\square}) := \begin{dcases} A_\square^{2n-k} + \sum_{j=1}^{2n-k} a^\square_j A_\square^{2n-k-j}  &k\leqslant 2n-1, \\
	\mathrm{Id}_{2n} &k=2n.
	\end{dcases}
	\end{equation*}
	We distinguish two cases.
	\smallskip
	
	\noindent
	\textbf{Case 1): $k$ is even.} One can easily show by induction that
	\begin{equation}
	A_\square^k = \begin{bmatrix} A_\Delta^{\frac{k}{2}} & 0 \\
	0 & A_{\Delta}^{\frac{k}{2}}\end{bmatrix},
	\end{equation}
	and, moreover, $a_j^\square=0$ for $j$ odd and $a_{2j}^\square=a_{j}^\Delta$ for $j$ even. Hence,
	\begin{align*}
	p_k(A_\square) &= \begin{bmatrix} A_\Delta^{\frac{2n-k}{2}} & 0 \\
	0 & A_{\Delta}^{\frac{2n-k}{2}}\end{bmatrix} + \sum_{j=2}^{2n-k} a_{\frac{j}{2}}^\Delta \begin{bmatrix} A_\Delta^{\frac{2n-k-j}{2}} & 0 \\
	0 & A_{\Delta}^{\frac{2n-k-j}{2}}\end{bmatrix}.
	\end{align*}
	Setting $k=2\kappa$ and $j=2r$, we see that
	\begin{align*}
	p_{2\kappa}(A_\square) &= \begin{bmatrix} A_\Delta^{n-\kappa} & 0 \\
	0 & A_{\Delta}^{n-\kappa}\end{bmatrix} + \sum_{r=1}^{n-\kappa} a_{r}^\Delta \begin{bmatrix} A_\Delta^{n-\kappa-r} & 0 \\
	0 & A_{\Delta}^{n-\kappa-r}\end{bmatrix} = \begin{bmatrix} p_{\kappa}(A_\Delta) & 0 \\ 
	0 & p_{\kappa}(A_\Delta) \end{bmatrix}.
	\end{align*}
	Consequently, for $k=2\kappa$, $\kappa\geqslant1$, 
	\begin{align} \label{eq: 4.7}
	&p_{2\kappa}(A_\square)\begin{bmatrix} 0 & 0\\
	0 & bb^\top\end{bmatrix} p_{2\kappa}(A_{\square})^\top \nonumber \\
	&\quad= \begin{bmatrix} 0 & 0 \\ 0 & p_{\kappa}(A_\Delta) bb^\top p_{\kappa}(A_\Delta)^\top\end{bmatrix}.
	\end{align}
	
	\smallskip
	
	\noindent
	\textbf{Case 2): $k$ is odd.} One can, once again, easily show by induction that
	\begin{equation*}
	A_\square^k = \begin{bmatrix} 0 & A_\Delta^{\frac{k-1}{2}} \\
	A_\Delta^{\frac{k+1}{2}} & 0 \end{bmatrix}.
	\end{equation*}
	Hence, 
	\begin{align*}
	p_k(A_\square) &= \begin{bmatrix} 0 & A_\Delta^{\frac{2n-k-1}{2}} \\
	A_\Delta^{\frac{2n-k+1}{2}} & 0 \end{bmatrix}+ \sum_{j=2}^{2n-k-1} a_{\frac{j}{2}}^\Delta \begin{bmatrix} 0 & A_\Delta^{\frac{2n-k-j-1}{2}} \\
	A_\Delta^{\frac{2n-k-j+1}{2}} & 0 \end{bmatrix}.
	\end{align*}
	By setting $k=2\kappa-1$ with $\kappa\geqslant1$, and $j=2r$, we find 
	\begin{align*}
	p_{2\kappa-1}(A_\square) &=  \begin{bmatrix} 0 & A_\Delta^{n-\kappa} \\
	A_\Delta^{n-\kappa+1} & 0 \end{bmatrix} + \sum_{r=1}^{n-\kappa} a_r^\Delta \begin{bmatrix} 0 & A_\Delta^{n-\kappa-r} \\
	A_\Delta^{n-\kappa+r+1} & 0 \end{bmatrix}.
	\end{align*}
	It then follows that for $\kappa\geqslant1$, 
	\begin{align} \label{eq: 4.8}
	p_{2\kappa-1}(A_\square)\begin{bmatrix} 0 & 0\\
	0 & bb^\top\end{bmatrix} p_{2\kappa-1}(A_{\square})^\top =  \begin{bmatrix}p_{\kappa}(A_\Delta) bb^\top p_{\kappa}(A_\Delta)^\top & 0 \\ 0 & 0\end{bmatrix}.
	\end{align}
	Combining \eqref{eq: 4.7} and \eqref{eq: 4.8}, we may conclude.
	\end{proof}

	\begin{figure}
	\includegraphics[scale=0.42]{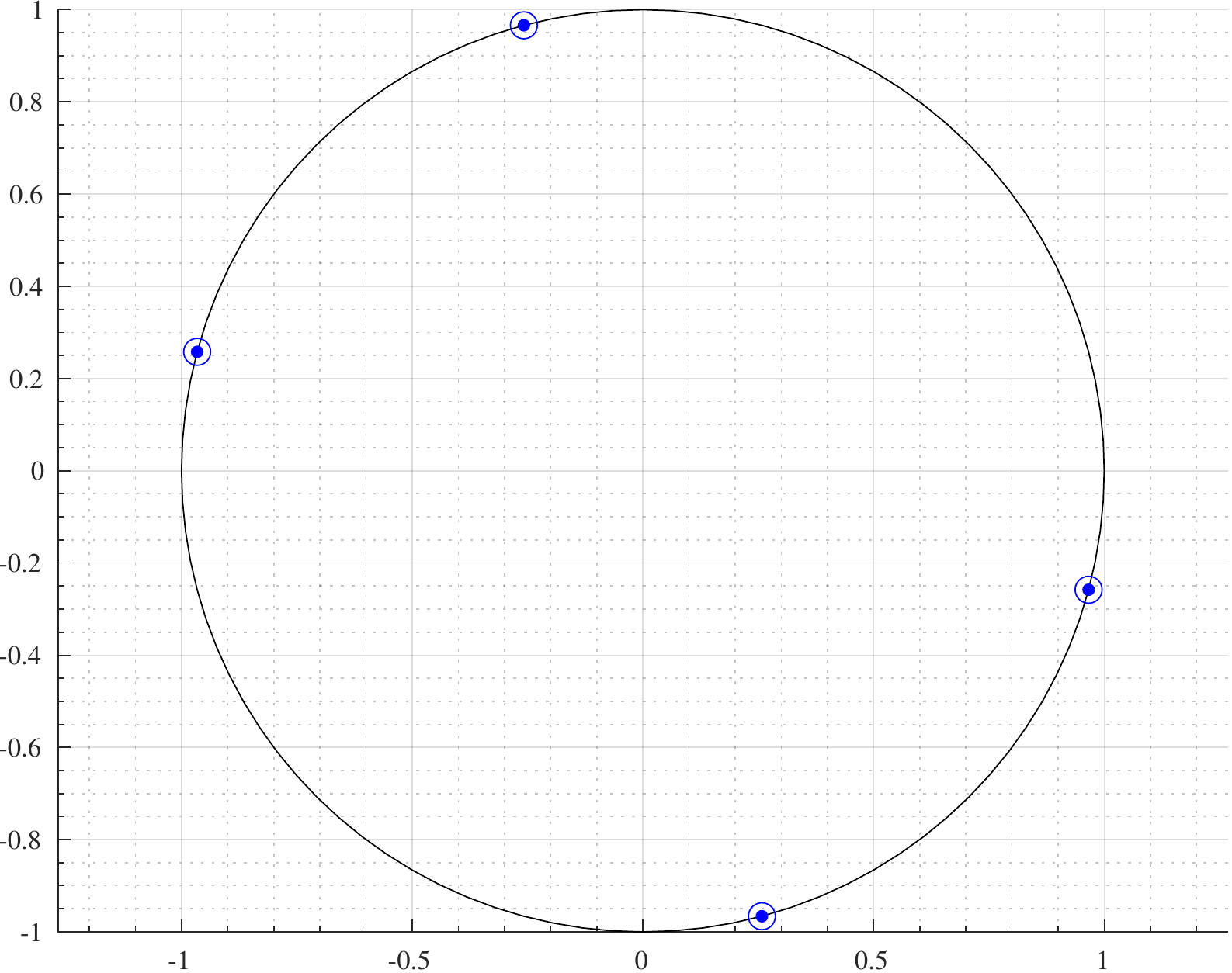}
	\hspace{0.1cm}
	\includegraphics[scale=0.42]{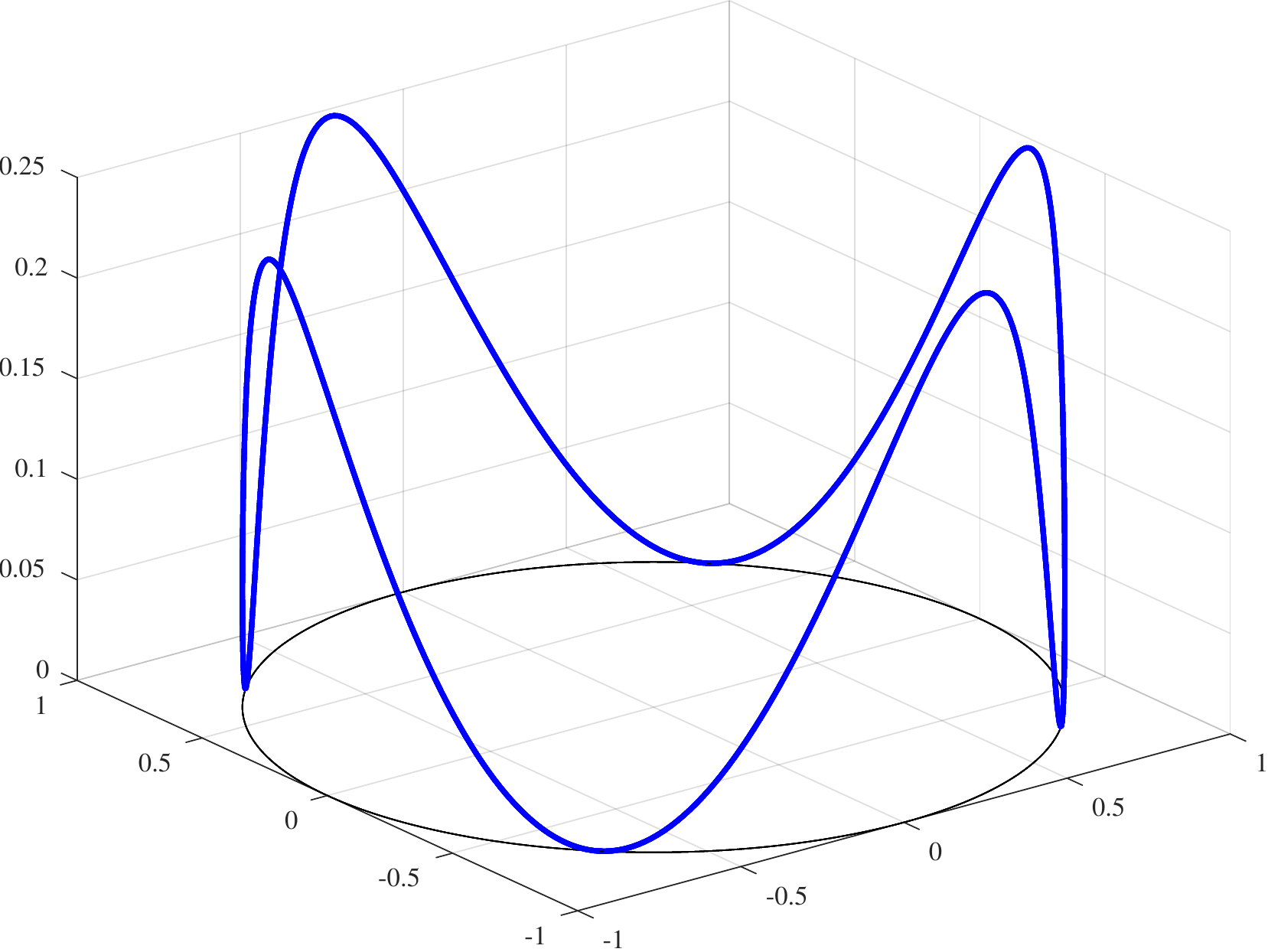}
	\caption{\textbf{Example \ref{ex: 3}} ($n=2$).  The maximizers and the functional are identical to the heat system in Example \ref{ex: 1}.}
	\label{fig: ex3.1}
	\end{figure}
	
	\begin{SCfigure}
	\includegraphics[scale=0.5]{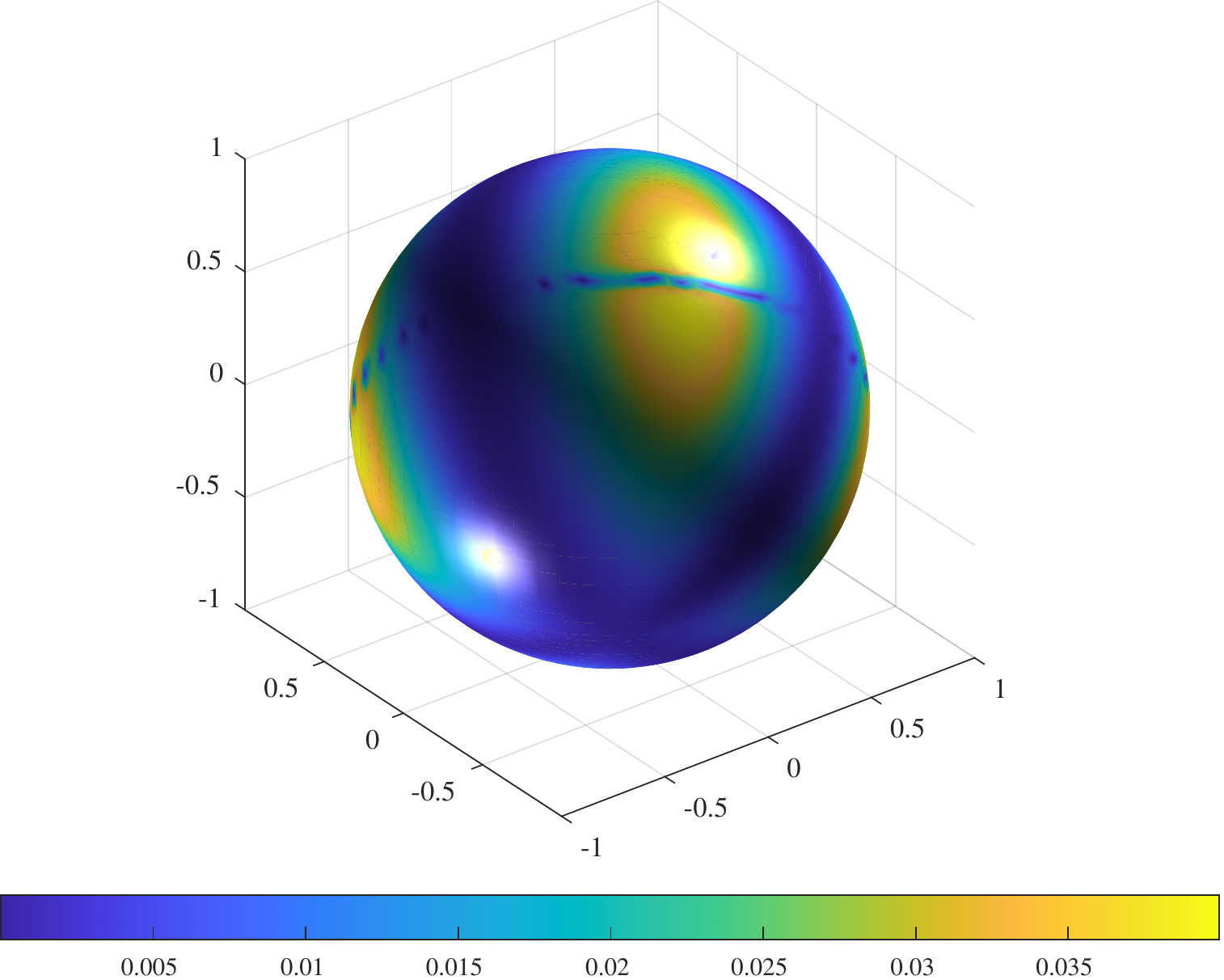}	
	\caption{\textbf{Example \ref{ex: 3}} ($n=3$). The functional $b\mapsto \lambda_1(P(b)P(b)^\top)$ on $\mathbb{S}^2$ (and thus the maximizers) are the same as for the heat system in Example \ref{ex: 1}.}
	  \label{fig: ex3.2}
	\end{SCfigure}
	
	\end{example}

	\begin{example}[Advection-diffusion equation with lumped control] \label{ex: 2}
	
	We now consider a system which is non-diagonalizable, hence existing methods based on randomization are not applicable.
 Namely, we consider the finite difference discretization of the one-dimensional advection-diffusion equation
	\begin{equation*}
	\begin{dcases}
	y_t(t,x) - y_{xx}(t,x) + y_x(t,x) = b(x) u(t) &(0,T)\times (0,1),\\
	y(t,0) = y(t,1) = 0 &(0,T),\\
	\end{dcases}
	\end{equation*}
	as well as
	\begin{equation*}
	\begin{dcases}
	y_t(t,x) - y_{xx}(t,x) - y_x(t,x) = b(x) u(t) &(0,T)\times (0,1),\\
	y(t,0) = y(t,1) = 0 &(0,T).\\
	\end{dcases}
	\end{equation*}
	Using a finite difference approximation as for Example \ref{ex: 1} and in particular a centered difference scheme for the advection term, we obtain a couple of finite-dimensional control systems with system dynamics of the form
	\begin{align*}
	A_{\pm\del_x} &:= \frac{1}{h^2}
	\begin{bmatrix} 
           -2& 1& 0& \ldots& 0 \\ 
           1& -2& 1& & \vdots \\
           0& \ddots &\ddots& \ddots& 0 \\
           \vdots& & 1 & -2 & 1 \\
           0& \hdots &0 &1 & -2 
    \end{bmatrix}  + \frac{1}{2h} \begin{bmatrix} 0& \pm1& 0& \ldots& 0 \\ 
						     \mp1& 0& \pm1& & \vdots \\
						     \vdots&  &\ddots& \ddots& 0 \\
						     0& \ldots& \mp1 & 0 & \pm1 \\
						     0& \hdots &\hdots &\mp1 & 0  
				\end{bmatrix}.
	\end{align*}
	We provide illustrations of the results in Figure \ref{fig: ex2.1} $(n=2)$ and Figure \ref{fig: ex2.2} $(n=3)$.
	
	In the case $n=2$, the (approximate) maximal value of $0.32236$ of the functional (same for both $A_{\del_x}$ and $A_{-\del_x}$) is attained at the points 
	\begin{align} \label{eq: max.ex.2.1}
	b^*_{\del_x} &\in \left\{\begin{bmatrix}-0.9548099\\0.296895\end{bmatrix}, \begin{bmatrix}0.9548099\\-0.296895\end{bmatrix}\right\}, \nonumber\\
	b^*_{-\del_x} &\in \left\{\begin{bmatrix}-0.296895\\0.9548099\end{bmatrix}, \begin{bmatrix}0.296895\\-0.9548099\end{bmatrix}\right\}.
	\end{align}
	Note that the maximizers $b^*_{\del_x}$ and $b^*_{-\del_x}$ are themselves an axial symmetry of one another. 
	
	Similarly, for $n=3$, we find
	\begin{align} \label{eq: max.ex.2.2}
	b^*_{\del_x} \in \left\{\begin{bmatrix}-0.8716\\0.4901\\-9.34*10^{-9}\end{bmatrix}, \begin{bmatrix}-0.8716\\0.4901\\1.246*10^{-6}\end{bmatrix}\begin{bmatrix}0.8716\\-0.4901\\-7.297*10^{-8}\end{bmatrix}, \begin{bmatrix} 0.8716\\-0.4901\\1.541*10^{-7}\end{bmatrix}\right\},
	\end{align}
	as well as 
	\begin{align} \label{eq: max.ex.2.2.1}
	b^*_{-\del_x} \in \left\{\begin{bmatrix}-9.229*10^{-8}\\0.4901\\ -0.8716\end{bmatrix}, \begin{bmatrix}-3.581*10^{-8}\\0.4901\\ -0.8716\end{bmatrix}\begin{bmatrix}-2.223*10^{-7}\\-0.4901\\0.8716\end{bmatrix}, \begin{bmatrix}1.787*10^{-7}\\-0.4901\\0.8716\end{bmatrix}\right\}.
	\end{align}
	
	\begin{figure}
	\includegraphics[scale=0.42]{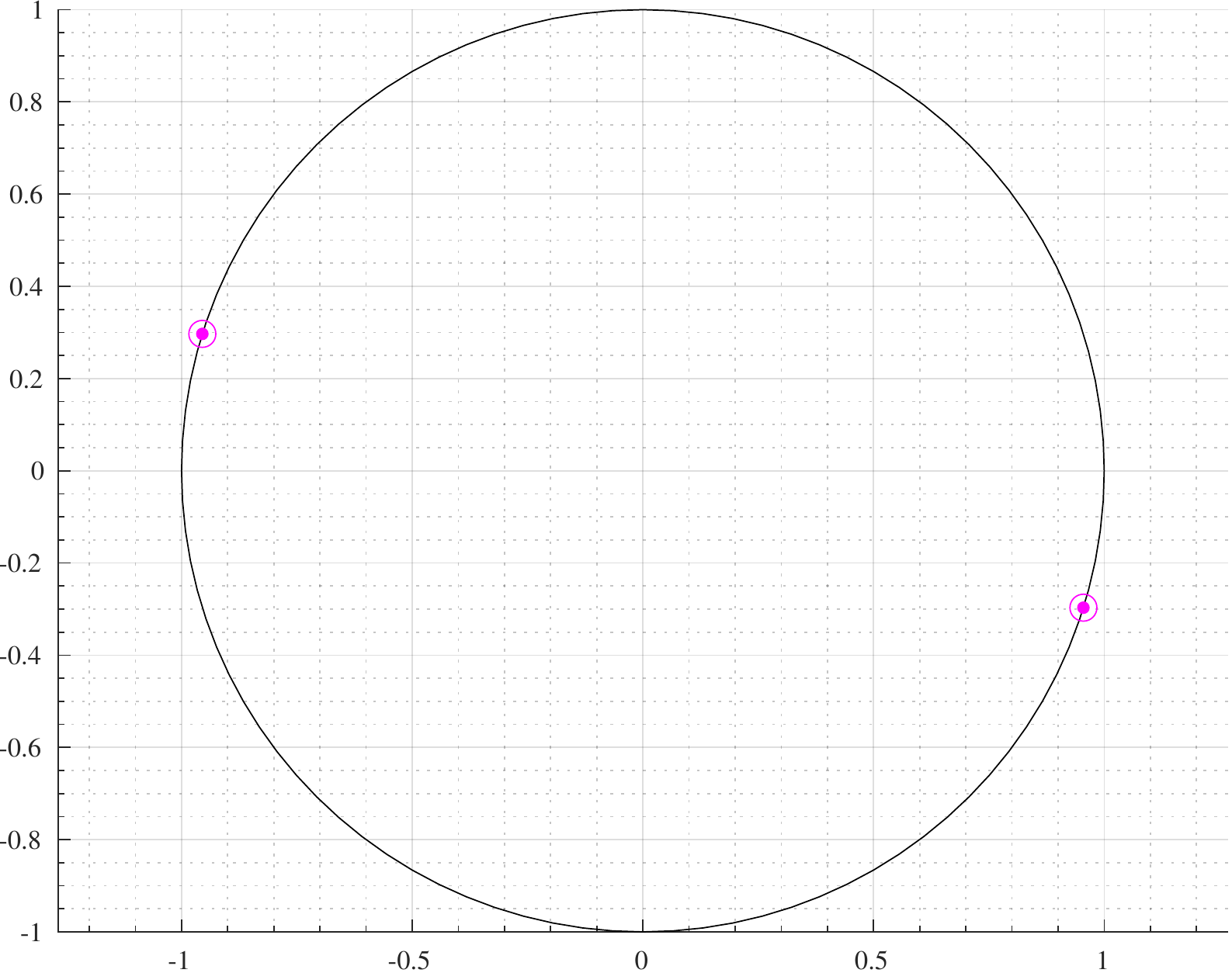}
	\hspace{0.1cm}
	\includegraphics[scale=0.42]{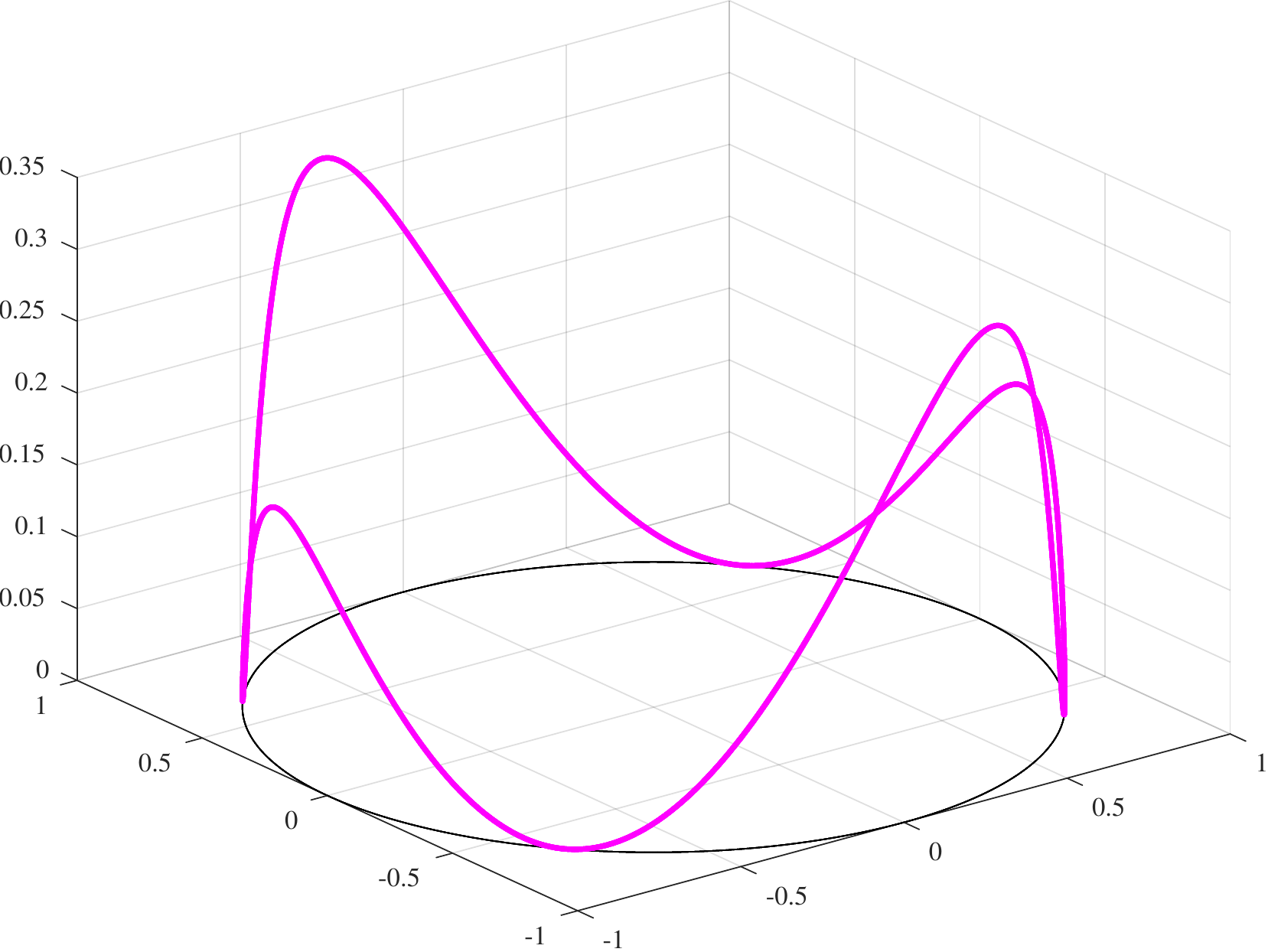}
	\vspace{0.1cm}
	
	\includegraphics[scale=0.42]{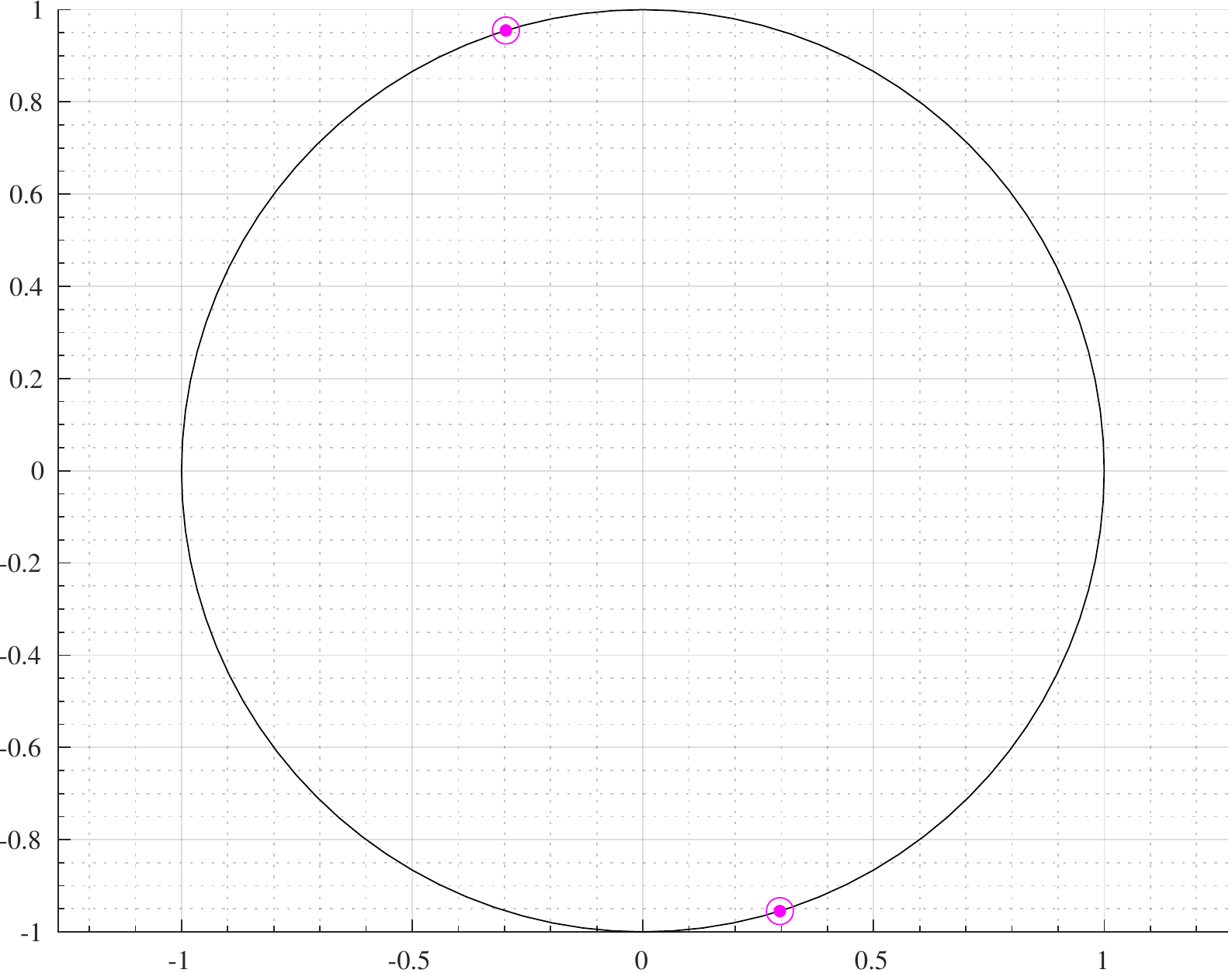}
	\hspace{0.1cm}
	\includegraphics[scale=0.42]{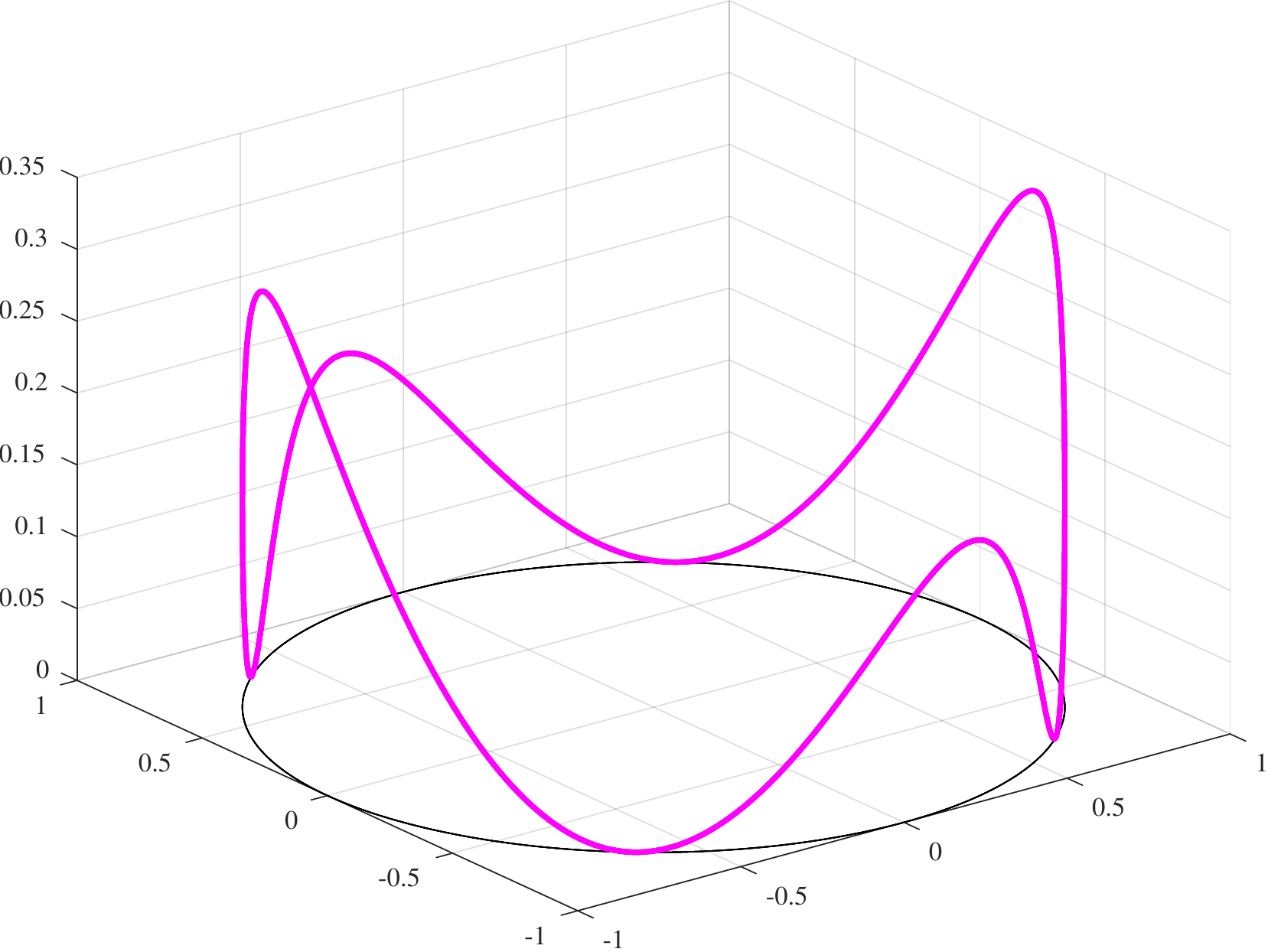}
	\caption{\textbf{Example \ref{ex: 2}} ($n=2$). \emph{Left}: The $2$ maximizers on $\mathbb{S}^1$ for both $A_{-\del_x}$ (\emph{top}) and $A_{\del_x}$ (\emph{bottom}), as indicated in \eqref{eq: max.ex.2.1}. \emph{Right}: the graph of the function $\mathbb{S}^1\ni b\mapsto\lambda_1(P(b)P(b)^\top)$ for both $A_{-\del_x}$ (\emph{top}) and $A_{\del_x}$ (\emph{bottom}), wherein we see that the maximum $\sim0.32236$ is attained at the computed maxima located on the left plots; axial symmetry of the maximizers, as well as the rotational symmetry between both functionals is also apparent.}
	\label{fig: ex2.1}
	\end{figure}
	
	\begin{figure}
	\begin{center}
	\includegraphics[scale=0.4]{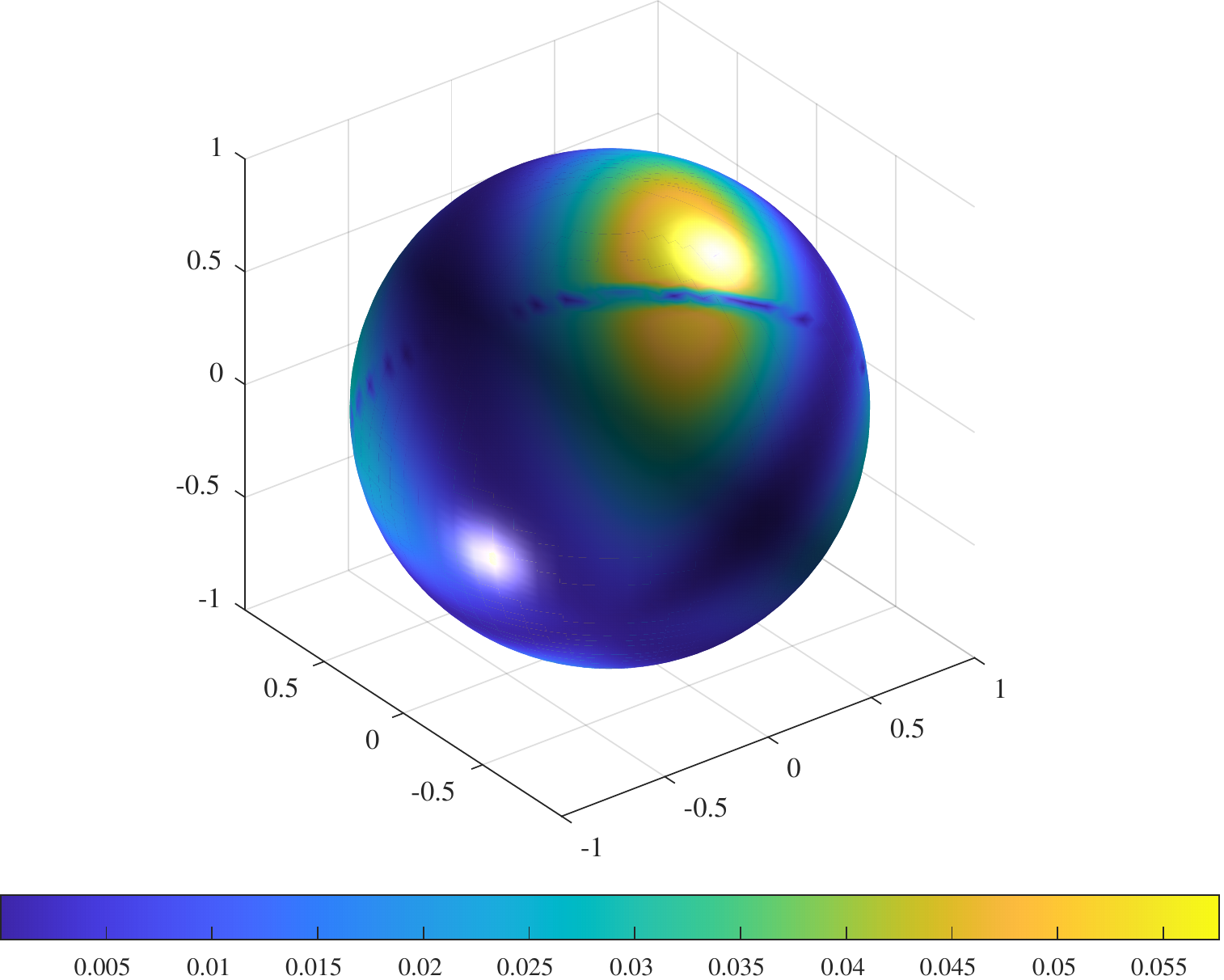}
	\includegraphics[scale=0.4]{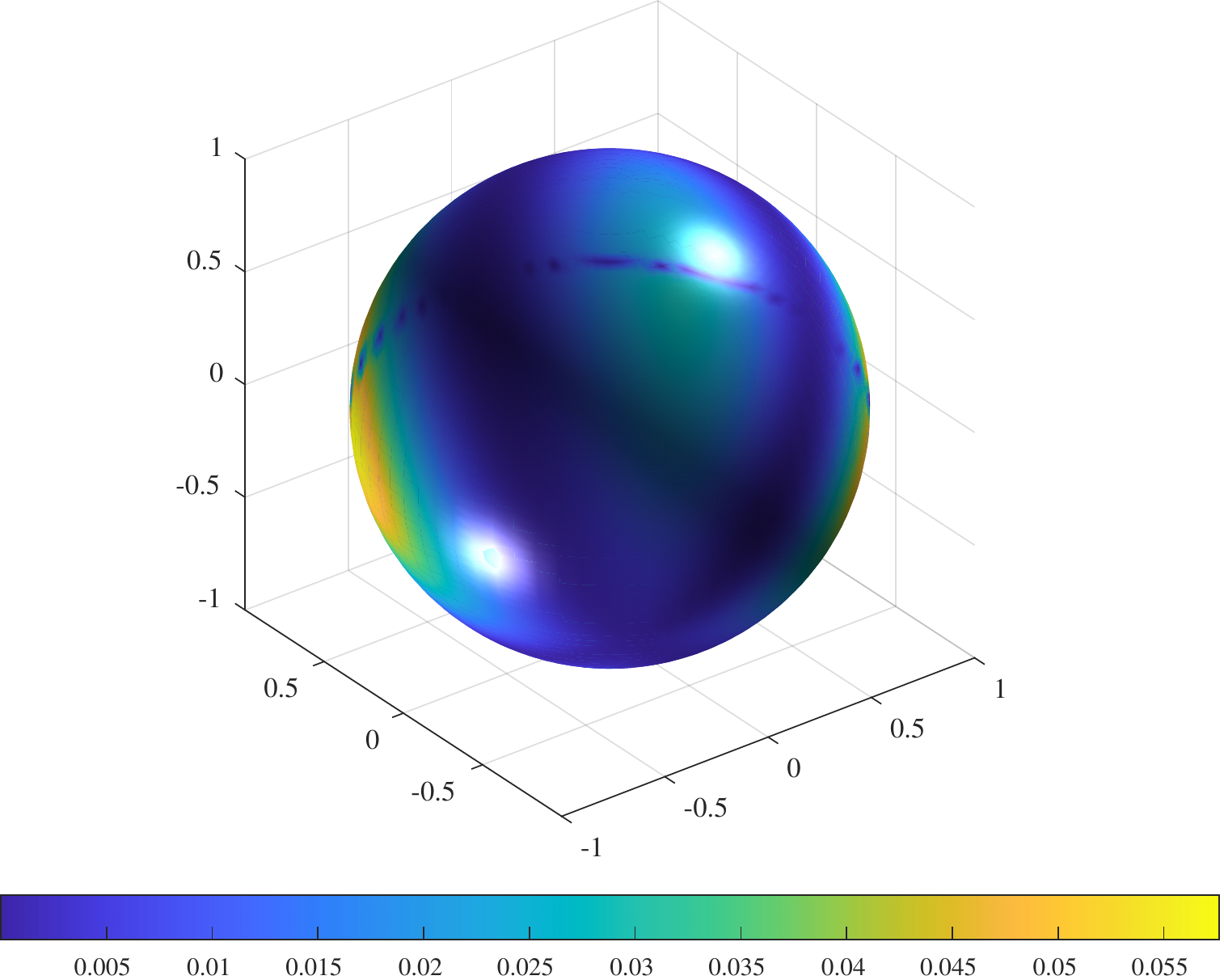}	
	\end{center}
	\caption{\textbf{Example \ref{ex: 2}} ($n=3$). The functional $b\mapsto \lambda_1(P(b)P(b)^\top)$ on $\mathbb{S}^2$. The maximizers (found in \eqref{eq: max.ex.2.2} and \eqref{eq: max.ex.2.2.1}) for both $A_{-\del_x}$ (\emph{left}) and $A_{\del_x}$ (\emph{right}) may be found in the bright yellow patches, which replicate on the opposite sides of the sphere.}
	\label{fig: ex2.2}
	\end{figure}
	
	\end{example}
	
	\section{Concluding remarks and outlook}
	
	By using the Brunovsky normal form, we discovered a reformulation of the problem consisting in finding the actuator which minimizes the controllability cost for finite dimensional linear systems with scalar controls. Such problems can be seen as, for instance, discretizations of one-dimensional lumped control problems for linear partial differential equations. We emphasize the fact that our study does not require the matrix generating the dynamics to be diagonalizable or rely on a randomization procedure of the initial data (as done in past literature in the infinite-dimensional setting).
	
	The Brunovsky reformulation provides a formulation of the control cost as a tensor product as it separates the time horizon and the controller. The resulting optimization problem reduces to the optimization of the norm of the inverse of a change of basis matrix, and allows us to stipulate the existence of minimizers (or maximizers for an equivalent variational problem), as well as non-uniqueness due to an invariance of the cost with respect to orthogonal transformations. 
	
	Let us emphasize several caveats and obstacles regarding our study, which we hope would shed some light on the possible directions of research, in view of providing a complete resolution of the optimal design problem in the deterministic case. 	
	
	\begin{itemize}
	\item The optimization of a functional which includes the inverse of a matrix is expected to not scale well with the dimension and thus possibly suffer from a curse of dimensionality. Whence, one should be wary regarding the transfer of the insights of the finite dimensional to the infinite dimensional setting.
	\smallskip 
	
	\item Even after considering the variational reformulation of the problem, which consists in maximizing the first eigenvalue of a positive-definite symmetric matrix, there are no obvious ways (to our knowledge) to solve such a mixed $\max$--$\min$ problem over a manifold such as $\mathbb{S}^{n-1}$. 
	In fact, we saw that gradient-based methods seem to fail to converge in dimensions $n\geqslant3$ -- we hence used a global optimization method based on an evolutionary algorithm, which, nonetheless, requires $\sim 8h$ to run when $n=10$ on a personal machine. We believe that a full clarification of the underlying difficulty of a numerical resolution of this problem in higher dimension, as well as the proposal of novel methods for its resolution are required.
	\end{itemize}
	
	In addition, we believe that there are a multitude of problems regarding the analysis of this problems which ought to be conducted. 
	These include the following.
	
	\subsection{Time-dependent coefficients, neural networks}
	Once all of the aforementioned problems are solved, one could look to time-dependent coefficient problems, namely for systems of the form	
	\begin{equation} \label{eq: time.dep}
	x'(t)  - A(t) x(t) = b(t) u(t) \hspace{1cm} \text{ in } (0,T).
	\end{equation}
	Note that the sparsity of $b(t)$ could also be enhanced imposing other restrictions of the form $\|b(\cdot)\|_{L^1(0,T; \R^n)} = 1$.
	
	Considering systems of the form \eqref{eq: time.dep} is particularly important in the context of \emph{deep learning} via \emph{continuous-time residual neural networks} (ResNets) (see \citep{weinan2017proposal, esteve2020large, ruiz2021neural, geshkovski2021control}), which are systems taking the form
	\begin{equation} \label{eq: resnet}
	x'(t) = \*w(t) \sigma(x(t)) + \*b(t) \hspace{1cm} \text{ in } (0,T).
	\end{equation}
	Here $\*w(t)\in\mathcal{M}_{n\times n}(\R)$ and $\*b(t)\in\R^n$ play the role of the controls, and $\sigma\in\mathrm{Lip}(\R)$. Simplifying by assuming that $\sigma=\text{Id}$, fixing $\*w(t)$, and writing $\*b(t)=bu(t)$ for $b\in\R^n$, we deduce a system of the form \eqref{eq: time.dep}. 
	
	For neural networks such as \eqref{eq: resnet}, minimizing the cost of control by means of controls which are as sparse as possible is clearly relevant for computational purposes due to the high dimensional data involved, and a linear study along with perturbation arguments could yield important insights (see \citep{yague2021sparse} for an optimal control approach to the sparsity issue). 
	There is, of course, a huge gap between the linear constant coefficient case presented above and the study of optimal controllers for ResNets. 
	But, the problems discussed above are deemed necessary in the bigger picture.
	
	\subsection{Uniqueness modulo rotations} We have seen that optimal actuators are in general not unique due to the invariance of the minimization (or maximization) problem with respect to orthogonal matrices which commute with the dynamics $A$. It would be of interest to see, at least in very particular test cases, whether a general result can be obtained characterizing the sets of optimal controllers depending on the symmetry properties of the matrix $A$. In such a case, one could perhaps deduce a uniqueness result modulo the rotated solutions. This insight is reinforced by our numerical simulations in dimensions $n=2,3$.
		
	\subsection{Non-scalar controls and PDEs} The Brunovsky normal form can also be extended to the case $m>1$, and thus $b \in \mathcal{M}_{n\times m}(\R)$. It would be of interest to see how the original problem of finding an optimal $b$  may be reformulated by means of the Brunovsky coordinates in the case $m>1$. This naturally raises the question of PDE shape design, which seems out of the scope of this particular method.
	
	\subsection{Optimization methods on manifolds}
	
	The algorithms we used need not always converge to a global maximizer lying on $\mathbb{S}^{n-1}$. The algorithm could be enforced by considering optimization methods (including gradient descent) specifically designed to variables lying on manifolds (see e.g., \citep{boumal2020introduction}\footnote{We thank Arieh Iserles for this reference.}). We leave this open to further investigation. 
	\medskip
	
	\noindent
	\textit{Acknowledgments.} We thank Yannick Privat for generally helpful comments.
	\smallskip
	
	\noindent
	{\footnotesize \textbf{Funding.} This project has received funding from the European Union's Horizon 2020 research and innovation programme under the Marie Sklodowska-Curie grant agreement No.765579-ConFlex. E.Z. has received funding from the Alexander von Humboldt-Professorship program, the European Research Council (ERC) under the European Union’s Horizon 2020 research and innovation programme (grant agreement NO. 694126-DyCon), the Transregio 154 Project “Mathematical Modeling, Simulation and Optimization Using the Example of Gas Networks” of the German DFG, grant MTM2017-92996-C2-1-R COSNET of MINECO (Spain), by the Elkartek grant KK-2020/00091 CONVADP of the Basque government and by the Air Force Office of Scientific Research (AFOSR) under Award NO: FA9550-18-1-0242.
	}

\appendix
	
	\section{Auxiliary proofs}
	
	\begin{proof}[Proof of Lemma \ref{lem: brunovski}] We only prove the first direction of the statement. 
	We split the proof in two steps.
\smallskip

\noindent \textbf{Step 1)}. Let us first assume that \eqref{eq: similar} is fulfilled for some invertible matrix $\overline{P}\in\mathcal{M}_{n\times n}(\R)$, whose columns we denote $\{\overline{f}_k\}_{k=1}^n$. 
From $b=\overline{P}\mathbf{e}_n$, we immediately deduce that $b = \overline{f}_n$, while each columns of the system $A\overline{P} = \overline{P}\mathfrak{A}$ yields 
\begin{align}
\begin{cases}
A \overline{f}_n = \overline{f}_{n-1} - a_{1}\overline{f}_n \\
A \overline{f}_{n-1} = \overline{f}_{n-2} - a_{2}\overline{f}_{n} \\
\quad \vdots\\
A \overline{f}_3 = \overline{f}_2 - a_{n-2} \overline{f}_n\\
A \overline{f}_2 = \overline{f}_1 - a_{n-1}\overline{f}_n\\
A \overline{f}_1 = -a_n \overline{f}_n.
\end{cases}
\end{align}
Here, we recall that $a_1, \ldots, a_n$ denote the coefficients of the characteristic polynomial of $A$.
The above relation can readily be rewritten to read as
\begin{align} \label{eq: def.recurs.fk}
\begin{cases}
\overline{f}_n = b,\\
A\overline{f}_{k} = \overline{f}_{k-1} - a_{n-k}\overline{f}_n, \quad \text{for all }k \in \{2, \ldots, n\},\\
A\overline{f}_1 = -a_n\overline{f}_n.
\end{cases}
\end{align}
Using the fact that \eqref{eq: def.recurs.fk} entails $\overline{f}_{k-1} = A\overline{f}_{k} + a_{n-k}b$ for $k\geqslant2$, by a brief induction argument we may further rewrite \eqref{eq: def.recurs.fk} to see that
\begin{equation} \label{eq: analy.def.fk.overload}
\overline{f}_k =
	\begin{dcases}
	b, &k=n \\
	\left(A^{n-k} + \sum_{j=1}^{n-k} a_{j} A^{n-k-j}\right) b &1\leqslant k \leqslant n-1.
	\end{dcases}
	\end{equation}
\textbf{Step 2).} Let us now define 
\begin{equation} \label{eq: P.def.proof}
P(b):= \Big[f_1 \mid \ldots \mid f_n\Big],
\end{equation}
with the columns $\{f_k\}_{k=1}^n$ of $P(b)$ being defined as in \eqref{eq: analy.def.fk.overload}. We shall prove that this $P(b)$ is invertible, and is the unique matrix such that \eqref{eq: similar} holds.

We begin by noting that
\begin{align}\label{eq: syst.fk}
\begin{aligned}
P(b)
= \quad &\Big[A^{n-1} b  \ \ A^{n-2}b \ \ A^{n-3}b \ \ \ldots  \ \ A^3b \ \  A^2b \ \ Ab \ \ b\Big]\\
+ a_1 &\Big[ A^{n-2}b \ \  A^{n-3}b \ \ A^{n-4}b \ \ \ldots \ \ A^2b  \ \ \ Ab \ \ \ b \ \ \ 0\Big]\\
+ a_2 &\Big[ A^{n-3}b \ \ A^{n-4}b \ \ A^{n-5}b \ \  \ldots \ \ \ \, Ab  \ \ \ \ b \ \ \ \ 0  \ \ \  0  \Big] \\
+ \ldots &\\
 + a_{n-3} &\Big[ \ \ A^2b  \quad \ \ Ab \qquad \ \ b \qquad \ldots \ \ \ \ \ 0  \ \ \ \  0 \ \ \ \ 0 \ \ \ 0 \, \Big]\\
 + a_{n-2} &\Big[ \ \ \ Ab  \qquad \ b \qquad \quad 0 \qquad \ldots \ \ \ \ \ 0  \ \ \ \  0 \ \ \ \ 0 \ \ \ 0 \, \Big]\\
 + a_{n-1} &\Big[ \ \ \ b  \qquad \quad 0  \qquad \quad 0 \qquad \ldots \ \ \ \ \ 0  \ \ \ \  0 \ \ \ \ 0 \ \ \ 0 \, \Big].
\end{aligned}
\end{align}
Whence, by the Kalman rank condition, $P$ has full rank and is thus invertible.
Left-multiplying the first column in \eqref{eq: syst.fk} by $A$, one obtains
\begin{align} \label{eq: cayley.hamilton}
Af_1 &= \big(A^n +a_1A^{n-1} + \ldots + a_{n-2}A^2 + a_{n-1}A\big)b = -a_n b,
\end{align}
where the rightmost equality is a consequence of the Cayley–Hamilton theorem.
Now, the definition of the columns in \eqref{eq: analy.def.fk.overload} combined with \eqref{eq: cayley.hamilton} leads us to deduce that \eqref{eq: def.recurs.fk} holds for the columns $\{f_k\}_{k=1}^n$.
Hence $AP= P\mathfrak{A}$, and one clearly also has $P\mathbf{e}_n = b$.
Thus, $P$ defined in \eqref{eq: P.def.proof} is invertible and is the unique matrix such that \eqref{eq: similar} holds.
	This concludes the proof.
	\end{proof}
	
	\begin{remark}[On the uniqueness of $P$]
	Another way to see that $P$ is the unique invertible matrix such that \eqref{eq: similar} holds is the following. Let $P_0$ be another matrix such that
	\begin{equation}
	A=P_0\mathfrak{A}P_0^{-1} \hspace{0.5cm} \text{ and } \hspace{0.5cm} b=P_0\mathbf{e}_n.
	\end{equation}
	Then, since $P$ is invertible, we may write
	\begin{equation}
	P_0 = QP
	\end{equation}
	for some matrix $Q \in \mathcal{M}_{n\times n}(\R)$.
	Thus 
	\begin{equation}
	AQP = QP\mathfrak{A} = QAP,
	\end{equation}
	so $Q$ commutes with $A$. Moreover, $QP\mathbf{e}_n = b$.
	But then
	\begin{equation}
	A^k b = A^k QP\mathbf{e}_n = QA^k P\mathbf{e}_n = QA^k b \hspace{0.5cm} \text{ for } k\geqslant 1.
	\end{equation}
	Since the vectors $A^k b$ span $\R^n$ (by virtue of the Kalman rank condition), we conclude that $Q\equiv\mathrm{Id}$.
	\end{remark}

\bibliographystyle{apalike}
	\bibliography{refs}{}

\end{document}